\documentclass[preprint,12pt]{article}
\usepackage{amssymb}
 \usepackage{amsthm}
\usepackage{latexsym}
\usepackage{amsmath}
\usepackage{amssymb}
\usepackage{color}

 \newcommand{\udots}{\mathinner{\mskip1mu\raise1pt\vbox{\kern7pt\hbox{.}}
\mskip2mu\raise4pt\hbox{.}\mskip2mu\raise7pt\hbox{.}\mskip1mu}}
\usepackage{amsmath}
\newtheorem{definition}{Definition}
\newtheorem{theorem}{Theorem}[section]

\newtheorem{lemma}{Lemma}[section]
\newtheorem{remark}{Remark}[section]

\begin{document}
\title{\bf \Large {\bf Quantifying Dependence Between Random Vectors: A New Index with Applications}}
{{\author{\normalsize{Chuancun Yin}\\{\normalsize\it    (School of Statistics and Data Science, Qufu Normal University}\\
\noindent{\normalsize\it Shandong 273165, China)}}}}
\maketitle
\vskip0.01cm
\noindent{\large {\bf Abstract}}  {This article proposes a new index for quantifying the degree of dependence between random vectors. The index takes values in [0,1] and equals zero if and only if the random vectors are sub-independent. Unlike mere uncorrelatedness, sub-independence implies a stronger form of dependence  while remaining strictly weaker than full independence. The proposed index is constructed via characteristic functions and admits a simplified representation in terms of moments. We establish its theoretical properties and derive a computationally efficient formula for the corresponding empirical measure. Furthermore, we investigate the asymptotic behavior of the estimator and demonstrate its practical utility through applications in machine learning, actuarial science, and renewal theory.}

\medskip
\noindent{\bf Key words:}  {\rm Characteristics function, Distance covariance, Independence, Limit theorems, Sub-independence }

\noindent{AMS 2000 subject classifications: 62G10, 62H20, 62G20, 60E10}

\baselineskip =20pt

\numberwithin{equation}{section}
\section{Introduction}\label{intro}
In the era of big data,  measuring and testing   the dependence between  random vectors play a central role
in many fields, including statistics,   biology and engineering, data science, machine learning, among
others. Up so far some methods for measuring and testing dependence have
appeared in the literature.
The most popular correlation coefficient is the   Pearson
product-moment correlation which   measures the linear dependence between two random variables.
Except for the Pearson product-moment correlation, the other two  most popular classical measures of statistical association are   Spearman's $\rho$, and Kendall's $\tau$. These coefficients are very powerful for detecting linear or monotone associations, however, it does not fully characterize dependence. To overcome
those limitations, Szekely, Rizzo, and Bakirov (2007) proposed a   nonparametric distance covariance
(dCov) and distance correlation (dCor)   to measure dependence  between two random vectors of arbitrary
dimensions, not necessarily
the same dimension; see also Szekely  and Rizzo (2009). Moreover, they also extended it to more general a class of measures called    $\alpha$-distance dependence measures.
Further, Sz\'ekely and Rizzo (2013) proposed a $t$-test based on a modified distance covariance. The research progress in this topic can be found in recent papers  Shao and Zhang (2014), Yao et al. (2018), Zhu et al. (2020), Li et al. (2024), Borgonovo et al. (2025),  Liu  and Shang (2026), and the references therein.
For two random vectors  $X\in \Bbb{R}^p$ and $Y\in \Bbb{R}^q$,  according to  Szekely, Rizzo, and Bakirov (2007), the  $\alpha$-distance $(0<\alpha<2)$ covariance  between $X$ and $Y$ is defined as
\begin{eqnarray}
{\rm dCov}^{2(\alpha)}(X,Y)=\frac{1}{c(p,\alpha) c(q,\alpha)}\int_{\Bbb{R}^{p+q}}\frac{|\phi_{X,Y}(t,s)-\phi_X(t)\phi_Y(s)|^2}{|t|_p^{\alpha+p}|s|_q^{\alpha+q}}dtds,
\end{eqnarray}
where  $\phi_X(t)$ and $\phi_Y(s)$   are the characteristic functions of $X$ and $Y$, respectively,  $\phi_{X,Y}$ is the joint characteristic
function of $(X,Y)$, and
$$c(p,\alpha)=\frac{2\pi^{p/2}\Gamma(1-\alpha/2)}{\alpha 2^{\alpha}\Gamma((\alpha+p)/2)},\,\,  c(q,\alpha)=\frac{2\pi^{q/2}\Gamma(1-\alpha/2)}{\alpha 2^{\alpha}\Gamma((\alpha+q)/2)}. $$
Here $\Gamma(\cdot)$ is the complete gamma function. In the simplest case, $\alpha =1$, the constants in (1.1) become
 $$c_p=c(p,1)=\frac{\pi^{(1+p)/2}}{\Gamma((1+p)/2)}, \,\,  c_q=c(q,1)=\frac{\pi^{(1+q)/2}}{\Gamma((1+q)/2)},$$
 which are half the surface areas of the unit spheres in $\Bbb{R}^p$ and $\Bbb{R}^q$, respectively.
  The integral in (1.1) exists provided that $X$ and $Y$ have finite  absolute moments of  $\alpha$-order.
Write ${\rm dCov}^{2(\alpha)}(X)={\rm dCov}^{2(\alpha)}(X,X)$ and ${\rm dCov}^{2(\alpha)}(Y)={\rm dCov}^{2(\alpha)}(Y,Y)$. Analogously to  Pearson correlation,
distance correlation (dCor) is the normalized version of
distance covariance, which is defined as
\begin{eqnarray*}
	{\rm dCor}^{2(\alpha)}(X,Y) =\left\{\begin{array}{ll}
		\frac{{\rm dCov}^{2(\alpha)}(X,Y)}{\sqrt{{\rm dCov}^{2(\alpha)}(X){\rm dCov}^{2(\alpha)}(Y)}}, \ &{\rm if}\, \, {\rm dCov}^{2(\alpha)}(X){\rm dCov}^{2(\alpha)}(Y)>0,\\
		 0,  \ &{\rm if}\,\, {\rm dCov}^{2(\alpha)}(X){\rm dCov}^{2(\alpha)}(Y)=0.
	\end{array}
	\right.
\end{eqnarray*}
 Szekely et al. (2007) showed that  for distributions with finite first
moments, we have $0\le {\rm dCor}^{2(\alpha)}(X,Y)\le 1$, and  ${\rm dCor}^{2(\alpha)}(X,Y)= 0$ if and only if $X$ and $Y$ are mutually independent.
Hence, distance correlation provides a natural extension of  Pearson correlation and rank correlation in
capturing arbitrary types of dependence.     The extension of distance covariance from pairs of random
variables to $n$-tuplets of random variables  was addressed in   B\"ottcher et al. (2019). Interested readers are referred to Szekely et al. (2007),  Szekely  and Rizzo (2009) and  B\"ottcher et al. (2019) for more details of the back-ground of the distance covariance.


Uncorrelatedness is quite weak in comparison with independence.
So, we are faced with this question: is there
a concept which is stronger than uncorrelatedness but
weaker than independence?  The answer to this question is yes and it is called sub-independence. This concept was first introduced by  Durairajan (1979) and subsequently used   in many papers, see e.g. Hamedani and Volkmer (2009), Ebrahimi et al. (2010), Hamedani (2013),  Schennach(2019), Dong et al. (2022)  and references therein.
Sub-independence, although less intuitive than independence in appearance, provides a more flexible and realistic theoretical foundation in many practical modeling scenarios. Since it only requires that the distribution of the sum of variables can be calculated through convolution, rather than requiring that the complete joint distribution can be decomposed into a product form, it has specific applications in multiple fields.
However, there are no  indices  to measure this dependence,  just like the product moment covariance and correlation measure the linear dependence of random variables as well as the distance covariance and distance correlation measure all types of dependencies between random variables.  Inspired by the ground breaking work  on distance covariance in  Szekely et al. (2007),  this paper focuses on a new dependence coefficient that measures such the dependence   for two random vectors  $X$ and $Y$ in arbitrary dimensions. The main features of our coefficient are the following: it has a simple expression; it belongs to the range
of  [0,1] and the value zero only if the random vectors
are sub-independent; it is fully nonparametric;   although
our measure is based on the  characteristic function, it can be simplified as an expression of  moments.
We study its properties, both theoretical and empirical, of the new  coefficient. As  applications   we present several examples such as the concentration inequalities used in machine learning, renewal theorems and sums of randomly indexed sequences in the framework of sub-independence.

The rest of this paper is organized as follows.  In Section 2  we propose a new  measure and
   the theoretical properties     are discussed.   In Section 3 the comparisons of sub-independence   with known dependencies are investigated  and results for the bivariate normal and Cauchy are given  in Section 4. Estimation and   asymptotic properties of the new index are presented in Section 5, some applications are given in Section 6, Section 7   is conclusions and prospects.

 \numberwithin{equation}{section}
\section{ The new class of measures}\label{intro}

The following notation is used  throughout.
The Euclidean norm for random vectors $X = (X_1,\cdots,X_d)^T \in \Bbb{R}^d, d \ge 1$,
is $|X|_d=(x_1^2+\cdots+x_d)^{1/2}$ and $\langle u, v\rangle$ is the   standard Euclidean scalar product of $u$ and $v$. Whenever the dimension $d$ is clear in context, we omit the subscript.
The symbol $|\cdot|$ denotes the complex norm when its argument is complex; otherwise $|\cdot|$ denotes Euclidean norm. We denote by the imaginary unit $i=\sqrt{-1}$.
 Let
$\phi_{X,Y}(s,t)=E(e^{i\langle s,X\rangle+i\langle t,Y\rangle})$
 be the joint characteristic function of $(X,Y)\in \Bbb{R}^{p\times q},$  $\phi_{X}(s)= \phi_{X,Y}(s,0)$ and $\phi_{Y}(t)= \phi_{X,Y}(0,t)$ be the corresponding marginal characteristic functions.  For any
function $\phi$, we denote $|\phi|^2=\phi\overline{\phi}$, where $\overline{\phi}$ denote the
complex conjugate of $\phi$.

 \begin{definition} [Ebrahimi et al. (2010), Schennach (2019)] (i)  $N$ real valued random variables
 $X_1,\cdots, X_n$  are said to be sub-independent if $\phi_{\sum_{i=1}^n X_i}(t)=\prod_{i=1}^n \phi_{X_i}(t)$ for all $t\in \Bbb{R}$.

 (ii) Random  variables $X_1,\cdots, X_n$ are said to mutually sub-independent if for each subset $\{X_{11},\cdots, X_{ik}\}$ ($2\le k\le n$) of $\{X_1,\cdots, X_n\}$,
$\phi_{X_{i1}+\cdots+X_{ik}}(t)=\phi_{X_{i1},\cdots,X_{ik}}(t,\cdots,t)=\prod_{r=1}^k\phi_{X_{ir}}(t)$ for all $t\in \Bbb{R}$.
 \end{definition}

 Alternatively, in terms of distributions,  random variables $X_i, i=1,2,\cdots,n$   are  sub-independent, if the distribution
of $\sum_{i=1}^n X_i$ is the convolution of the distributions of $X_i$'s.
In this case, $X_i$'s are said to be a summable uncorrelated marginals (SUM). It is clear that the SUM and sub-independence are equivalent, so the two terminologies can be used
interchangeably (cf. Ebrahimi et al. (2010)).
The concept of sub-independence defined above can be extended to random vectors of arbitrary dimensions.

Although sub-independence is a weaker and more flexible assumption than full independence, to date, no  statistical  index has been available to measure this type of dependence. This paper fills that gap by proposing two new measures.

\begin{definition} Let $(X_i)_{i=1,...,n}$ be random  vectors with values in  $\Bbb{R}^p$.
   The  $\alpha$-sub-independent $(0<\alpha<2)$ covariance   between  random vectors   $X_1,\cdots, X_n$
   is the nonnegative number
$siCov^{(\alpha)}(X_1,\cdots, X_n)$ defined by
\begin{eqnarray}
 siCov^{(\alpha)}(X_1,\cdots, X_n)=\int_{\Bbb{R}^{p}}|\phi_{X_1,\cdots,X_n}(t,\cdots,t)-\prod_{i=1}^n\phi_{X_i}(t)|^2 \rho_{\alpha}(t)dt,
\end{eqnarray}
where
$\rho_{\alpha}(t)=c(p,\alpha)^{-1}|t|_p^{-\alpha-p}.$
The  $\alpha$-sub-independent correlation between random vectors   $X_1,\cdots, X_n$
  is the nonnegative number $siCor^{(\alpha)}(X_1,\cdots, X_n)$
defined by
\begin{eqnarray}
	siCor^{(\alpha)}(X_1,\cdots, X_n) = \frac{siCov^{(\alpha)}(X_1,\cdots, X_n)}{\int_{\Bbb{R}^{p}}\prod_{i=1}^n (1-|\phi_{X_i}(t)|^2)\rho_{\alpha}(t)dt },
\end{eqnarray}
  if   $\int_{\Bbb{R}^{p}}\prod_{i=1}^n (1-|\phi_{X_i}(t)|^2)\rho_{\alpha}(t)dt>0$, or 0 otherwise.	
\end{definition}

 For simplicity, we restrict our discussion to the case $n=2$ in what follows.
We call $1$-sub-independent   covariance as sub-independent   covariance,   $1$-sub-independent correlation as sub-independent correlation;   we write $siCov^{(\alpha)}(X)=siCov^{(\alpha)}(X, X)$, $siCov^{(\alpha)}(Y)=siCov^{(\alpha)}(Y, Y)$,
 $siCov(X,Y)=siCov^{(1)}(X,Y)$ and  $siCor(X,Y)=siCor^{(1)}(X,Y)$.

\begin{remark}
Condition $\int_{\Bbb{R}^{p}}(1-|\phi_X(t)|^2)(1-|\phi_Y(t)|^2)\rho_{\alpha}(t)dt>0$ is equivalent to
both $X$ and $Y$ are not  constant vectors. Unlike the normalized version of
distance covariance in Szekely et al. (2007),   $siCor^{(\alpha)}(X,Y)$  can not be defined in
the form of
 $$ siCor^{(\alpha)}(X,Y)=\frac{ siCov^{(\alpha)}(X, Y)}{\sqrt{ siCov^{(\alpha)}(X)\cdot siCov^{(\alpha)}(Y) }},$$
 which violates the usual condition that $siCor^{(\alpha)}(X,Y)\le 1$;   see a counterexample in    Remark 4.1.
\end{remark}

 For finiteness of  $siCov^{(\alpha)}(X, Y)$, it is sufficient that $E(|X|^{\alpha}_p+|Y|^{\alpha}_p)<\infty$.
The follow lemma is crucial which  can be found in Sz\'ekely and Rizzo (2009); see also von Bahr and Esseen  (1965) for the case $p=1$.
\begin{lemma} Assume that $0<\alpha<2$ and $X\in\Bbb{R}^p$, if $E|X|^{\alpha}_p<\infty$, then
\begin{eqnarray}
\int_{\Bbb{R}^p}\frac{1-Ee^{i\langle t,X\rangle}}{|t|_p^{\alpha+p}}dt=\int_{\Bbb{R}^p}\frac{1-E\cos\langle t,X\rangle}{|t|_p^{\alpha+p}}dt = c(p,\alpha) E|X|^{\alpha}_p.
\end{eqnarray}
\end{lemma}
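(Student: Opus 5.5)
The plan is to verify the formula first in dimension one and then transfer it to $\Bbb{R}^p$ by rotation invariance, using Fubini--Tonelli to move the expectation outside the integral.

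\textbf{Step 1: the two integrals agree.} Since $1-E\cos\langle t,X\rangle\ge 0$, Tonelli's theorem lets us write
$$\int_{\Bbb{R}^p}\frac{1-E\cos\langle t,X\rangle}{|t|_p^{\alpha+p}}\,dt=E\int_{\Bbb{R}^p}\frac{1-\cos\langle t,X\rangle}{|t|_p^{\alpha+p}}\,dt ,$$
whether or not either side is finite. The integrand $\sin\langle t,X\rangle/|t|_p^{\alpha+p}$ is odd in $t$. So once absolute integrability is established, the imaginary part of $1-Ee^{i\langle t,X\rangle}$ integrates to zero. In fact we only need a symmetric (principal value) limit over balls, since $|\sin\langle t,x\rangle|\le \min(1,|t||x|)$ makes the integral over any annulus well defined. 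This reduces everything to the deterministic identity
$$\int_{\Bbb{R}^p}\frac{1-\cos\langle t,x\rangle}{|t|_p^{\alpha+p}}\,dt=c(p,\alpha)|x|_p^{\alpha},\qquad x\in\Bbb{R}^p .$$
Taking expectations then gives the lemma, with finiteness following from $E|X|^\alpha_p<\infty$.

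\textbf{Step 2: the deterministic identity.} For $x=0$ both sides vanish. For $x\neq 0$:

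1. \emph{Reduce to a unit vector.} Rotation invariance of Lebesgue measure and of $|t|_p$ lets us take $x=|x|e_1$. The substitution $t\mapsto t/|x|$ then pulls out the factor $|x|^{\alpha}$, since the Jacobian $|x|^{-p}$ and the homogeneity $|x|^{\alpha+p}$ combine to $|x|^\alpha$. Convergence holds because $1-\cos t_1\le \min(2,t_1^2/2)$, and $\alpha<2$ controls the origin while $\alpha>0$ controls infinity.

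2. \emph{Compute the constant.} It remains to evaluate $I=\int(1-\cos t_1)|t|^{-\alpha-p}\,dt$. I would write $t=(t_1,u)$ with $u\in\Bbb{R}^{p-1}$ and integrate out $u$ first. The substitution $u=|t_1|v$ gives
$$\int_{\Bbb{R}^{p-1}}(t_1^2+|u|^2)^{-(\alpha+p)/2}\,du=|t_1|^{-1-\alpha}\int_{\Bbb{R}^{p-1}}(1+|v|^2)^{-(\alpha+p)/2}\,dv .$$
The last integral is a Beta integral, equal to $\pi^{(p-1)/2}\Gamma((1+\alpha)/2)/\Gamma((\alpha+p)/2)$.

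3. \emph{The one-dimensional integral.} This leaves
$$\int_{\Bbb{R}}\frac{1-\cos s}{|s|^{1+\alpha}}\,ds=\frac{2\Gamma(1-\alpha/2)\sqrt{\pi}}{\alpha2^{\alpha}\Gamma((1+\alpha)/2)} .$$
This is the von Bahr--Esseen constant. I would obtain it by integrating by parts once, which gives $\frac{2}{\alpha}\int_0^\infty s^{-\alpha}\sin s\,ds$. The standard formula $\int_0^\infty s^{-\alpha}\sin s\,ds=\Gamma(1-\alpha)\cos(\pi\alpha/2)$ then applies, with the case $\alpha=1$ taken by continuity. Finally, the duplication and reflection formulas for $\Gamma$ convert this into the stated form.

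4. \emph{Assemble.} Multiplying the constants from sub-steps 2 and 3, the factors $\Gamma((1+\alpha)/2)$ cancel. What remains is $2\pi^{p/2}\Gamma(1-\alpha/2)/(\alpha2^\alpha\Gamma((\alpha+p)/2))=c(p,\alpha)$.

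\textbf{Main obstacle.} The delicate point is the Gamma-function bookkeeping in sub-step 3. Converting $\Gamma(1-\alpha)\cos(\pi\alpha/2)$ into $\sqrt{\pi}\,\Gamma(1-\alpha/2)/(2^{\alpha}\Gamma((1+\alpha)/2))$ needs both the duplication formula for $\Gamma(1-\alpha)$ and the reflection formula. Care is needed near $\alpha=1$, where $\Gamma(1-\alpha)$ has a pole that is cancelled by the zero of $\cos(\pi\alpha/2)$. Everything else is routine scaling and Tonelli. Alternatively, one may simply cite Sz\'ekely and Rizzo (2009), as the paper indicates.
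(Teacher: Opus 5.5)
Your proof is correct. The paper does not prove this lemma. It states it and defers to Sz\'ekely and Rizzo (2009), and to von Bahr and Esseen (1965) for $p=1$. What you have written is a self-contained version of the standard argument behind those references:
\begin{itemize}
\item rotation and scaling reduce to $x=e_1$;
\item integrating out the $p-1$ orthogonal coordinates gives the factor $\pi^{(p-1)/2}\Gamma((1+\alpha)/2)/\Gamma((\alpha+p)/2)$;
\item integration by parts and $\int_0^\infty s^{-\alpha}\sin s\,ds=\Gamma(1-\alpha)\cos(\pi\alpha/2)$ give the one-dimensional constant.
\end{itemize}
I checked the duplication/reflection step, $\Gamma(1-\alpha)\cos(\pi\alpha/2)=\sqrt{\pi}\,2^{-\alpha}\Gamma(1-\alpha/2)/\Gamma((1+\alpha)/2)$, and the final product; they give exactly $c(p,\alpha)$.

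Compared with the bare citation, your version shows where $0<\alpha<2$ is used. It is needed for integrability at the origin and at infinity, for the vanishing boundary terms in the integration by parts, and for the range $-1<1-\alpha<1$ of the Mellin sine formula. Your version also clarifies what the first, complex-valued integral in the lemma means.

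On that point, tighten one sentence. Absolute integrability of $E\sin\langle t,X\rangle\,|t|_p^{-\alpha-p}$ cannot be ``established'' in general. If $\alpha\ge 1$ and $EX\neq 0$, then $E\sin\langle t,X\rangle=\langle t,EX\rangle+o(|t|_p)$ as $t\to 0$, by dominated convergence with bound $2|X|_p$. On a cone of directions the integrand is therefore of order $|t|_p^{1-\alpha-p}$ near the origin, which is not integrable. So the symmetric (principal-value) limit you fall back on is the only sense in which the first expression in the lemma is meaningful, not merely a convenience. With that reading your argument is complete: oddness kills the imaginary part on each region $\{\varepsilon<|t|_p<R\}$, and monotone convergence handles the nonnegative real part as $\varepsilon\to 0$, $R\to\infty$.
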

To prove
$siCov^{(\alpha)}(X, Y)<\infty$,
by the Cauchy-Schwarz inequality, we have
\begin{eqnarray*}
|\phi_{X,Y}(t,t)-\phi_X(t)\phi_Y(t)|^2&=& |E(e^{i<t,X>}-\phi_X(t))(e^{i<t,Y>}-\phi_Y(t))|^2\\
&\le & \left(E[|e^{i<t,X>}-\phi_X(t)||e^{i<t,Y>}-\phi_Y(t)|]\right)^2\\
&\le& E|e^{i<t,X>}-\phi_X(t)|^2 E|e^{i<t,Y>}-\phi_Y(t)|^2\\
&=&(1-|\phi_X(t)|^2)(1-|\phi_Y(t)|^2).
\end{eqnarray*}
Then by Lemma 2.1 and by Fubini's theorem it follows that
\begin{eqnarray*}
siCov^{(\alpha)}(X, Y) &=&\int_{\Bbb{R}^{p}}|\phi_{X,Y}(t,t)-\phi_X(t)\phi_Y(t)|^2 \rho_{\alpha}(t)dt\\
&\le& \int_{\Bbb{R}^{p}}(1-|\phi_X(t)|^2)(1-|\phi_Y(t)|^2)\rho_{\alpha}(t)dt\\
&\le & \sqrt{\int_{\Bbb{R}^{p}}(1-|\phi_X(t)|^2)^2\rho_{\alpha}(t)dt \int_{\Bbb{R}^{p}}(1-|\phi_Y(t)|^2)^2\rho_{\alpha}(t)dt}\\
&\le & \sqrt{\int_{\Bbb{R}^{p}}(1-|\phi_X(t)|^2)\rho_{\alpha}(t)dt \int_{\Bbb{R}^{p}}(1-|\phi_Y(t)|^2)\rho_{\alpha}(t)dt}\\
&=&\sqrt{ E|X-X'|^{\alpha}_p E|Y-Y'|^{\alpha}_p}<\infty.
\end{eqnarray*}
Similarly,
\begin{eqnarray*}
 siCov^{(\alpha)}(X)&=&\int_{\Bbb{R}^{p}}|\phi_{X,X}(t,t)-\phi_X(t)\phi_X(t)|^2 \rho_{\alpha}(t)dt\\
&\le& \int_{\Bbb{R}^{p}} (E|e^{i<t,X>}-\phi_X(t)|^2)^2\rho_{\alpha}(t)dt\\
&=& \int_{\Bbb{R}^{p}} \left(E(e^{i<t,X>}-\phi_X(t))(e^{-i<t,X>}-\phi_X(-t)))\right)^2\rho_{\alpha}(t)dt\\
&=& \int_{\Bbb{R}^{p}}(1-Ee^{i<t,X>}Ee^{i<-t,X>})^2\rho_{\alpha}(t)dt\\
&= & \int_{\Bbb{R}^{p}}(1-|\phi_X(t)|^2)^2\rho_{\alpha}(t)dt\\
&\le & \int_{\Bbb{R}^{p}}(1-|\phi_X(t)|^2)\rho_{\alpha}(t)dt\\
&=& E|X-X'|^{\alpha}_p<\infty,
\end{eqnarray*}
and
\begin{eqnarray*}
siCov^{(\alpha)}(Y)=\int_{\Bbb{R}^{p}}|\phi_{Y,Y}(t,t)-\phi_Y(t)\phi_Y(t)|^2 \rho_{\alpha}(t)dt\le E|Y-Y'|^{\alpha}_p<\infty.
\end{eqnarray*}

 The following results summarize the key properties of sub-independent covariance.
\begin{theorem} The following properties hold:

(1). For all $X, Y \in \Bbb{R}^p$,  $ siCov^{(\alpha)}(X, Y)\ge 0$, and  $ siCov^{(\alpha)}(X, Y)= 0$ if and only if $X$ and $Y$ are sub-independent.

(2). For all $X \in \Bbb{R}^p$, $ siCov^{(\alpha)}(X)=0$ iff  $X = E(X)$, almost surely.

(3). $ siCov^{(\alpha)}(a_1+bCX, a_2+bCY)=|b|siCov^{(\alpha)}(X, Y)$, for all constant vectors $a_1,a_2\in\Bbb{R}^p$,   scalar $b$ and   $p\times p$ orthonormal matrix $C$.

(4). $ siCov^{(\alpha)}(a+bCX)=|b| siCov^{(\alpha)}(X)$, for all constant vector $a\in\Bbb{R}^p$,   scalar $b$ and orthonormal matrix $C\in\Bbb{R}^p$.

(5). If random variables $X$ and $Y$ are  sub-independent, then for any $\alpha>0$ such that $E(|X|^{\alpha}+|Y|^{\alpha})<\infty$, one has $ siCov^{(\alpha)}(X, Y)=0$.  Conversely,
if $ siCov^{(\alpha)}(X, Y)=0$ for some $\alpha>0$, then $X$ and $Y$ are  sub-independent.
\end{theorem}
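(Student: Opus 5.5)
The plan is to work directly from the definition of $siCov^{(\alpha)}$ as a weighted $L^2$ integral of the continuous function $\Delta(t)=\phi_{X,Y}(t,t)-\phi_X(t)\phi_Y(t)$ against the strictly positive weight $\rho_\alpha$ on $\Bbb{R}^p\setminus\{0\}$. Finiteness under the moment assumption has already been shown above, via Cauchy--Schwarz and Lemma 2.1.

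\emph{Parts (1) and (5).} Nonnegativity is immediate. If $siCov^{(\alpha)}(X,Y)=0$, then $\Delta(t)=0$ for Lebesgue-a.e.\ $t$, because $\rho_\alpha>0$. Since characteristic functions are continuous, $\Delta\equiv 0$. Now $\phi_{X,Y}(t,t)=Ee^{i\langle t,X+Y\rangle}=\phi_{X+Y}(t)$, so $\Delta\equiv0$ is exactly $\phi_{X+Y}=\phi_X\phi_Y$, which is sub-independence. Conversely, sub-independence gives $\Delta\equiv0$ and hence a zero integral. Part (5) is the same statement, read for any admissible $\alpha$ for which the integral is finite. The argument never uses the value of $\alpha$ beyond $\rho_\alpha>0$, so vanishing for one $\alpha$ is equivalent to vanishing for all $\alpha$.

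\emph{Parts (3) and (4).} Here $\phi_{a_1+bCX}(t)=e^{i\langle t,a_1\rangle}\phi_X(bC^Tt)$, and similarly for the joint function. Hence $\Delta_{a_1+bCX,\,a_2+bCY}(t)=e^{i\langle t,a_1+a_2\rangle}\Delta_{X,Y}(bC^Tt)$, and the unimodular factor disappears in $|\cdot|^2$. For $b\neq0$ I substitute $u=bC^Tt$. Then $dt=|b|^{-p}du$ and $|t|^{-\alpha-p}=|b|^{\alpha+p}|u|^{-\alpha-p}$, using orthogonality of $C$. This gives the factor $|b|^{\alpha}$, which equals $|b|$ in the case $\alpha=1$ that the notation $siCov$ refers to; for general $\alpha$ I would state the result as $|b|^\alpha$. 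For $b=0$ both variables are constant, $\Delta\equiv0$, and both sides vanish. Part (4) is the special case $Y=X$, $a_1=a_2=a$ (shifting by $a$ on each copy contributes only a phase).

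\emph{Part (2)} is the main obstacle. By the argument for (1), $siCov^{(\alpha)}(X)=0$ iff $\phi_X(2t)=\phi_X(t)^2$ for all $t$. If $X=EX$ a.s., this clearly holds. For the converse I would iterate to get $\phi_X(t)=\phi_X(t/2^n)^{2^n}$. When $E|X|<\infty$, the expansion $\phi_X(s)=1+i\langle s,\mu\rangle+o(|s|)$ gives $\phi_X(t/2^n)^{2^n}\to e^{i\langle t,\mu\rangle}$, so $\phi_X(t)=e^{i\langle t,\mu\rangle}$ and $X=\mu=EX$ a.s. This needs a finite first moment, which holds under $E|X|^\alpha<\infty$ when $\alpha\ge1$. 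For $\alpha<1$ the claim actually fails: a standard Cauchy $X$ has $\phi(2t)=e^{-2|t|}=\phi(t)^2$ and $E|X|^\alpha<\infty$. I would therefore prove (2) under $E|X|<\infty$ and remark on this counterexample.
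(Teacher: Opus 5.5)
Your proposal is correct, and for (1), (3), (4) and (5) it follows the paper's route. The paper proves (1) by the same continuity argument: if $\phi_{X,Y}(t,t)\neq\phi_X(t)\phi_Y(t)$ at some point, the integrand is positive on an open set. It proves (3) by the substitution $s=bC't$, obtains (4) as the case $Y=X$, $a_1=a_2$, and calls (5) obvious. Where you depart from the paper, you are right and the paper is not. In (3) the paper gives the factor as $|b|$ for every $\alpha$. But the Jacobian $|b|^{-p}$ together with $|t|^{-\alpha-p}=|b|^{\alpha+p}|s|^{-\alpha-p}$ produces $|b|^{\alpha}$, exactly as you found, so the paper's $|b|$ is correct only for $\alpha=1$.

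In (2) the paper gives no argument. It notes that $siCov^{(\alpha)}(X)=0$ iff $\phi_X(2t)=\phi_X(t)^2$ and then simply asserts $X=EX$ a.s. Your iteration $\phi_X(t)=\phi_X(t/2^n)^{2^n}$, combined with $\phi_X(s)=1+i\langle s,EX\rangle+o(|s|)$, supplies the missing step under $E|X|_p<\infty$; this holds automatically when $\alpha\ge 1$. Your Cauchy example, where the integrand vanishes identically, shows that the unqualified claim is false. In particular it fails for every $\alpha<1$ even under the standing assumption $E|X|_p^{\alpha}<\infty$, which is exactly the regime of the paper's bivariate Cauchy computation in Section~4. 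So your restricted version of (2) and the $|b|^{\alpha}$ form of (3)--(4) are the statements that are actually provable.
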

{\bf Proof}. (1) If $X$ and $Y$  are sub-independent then  $\phi_{X,Y}(t,t)= \phi_{X}(t)\phi_{Y}(t)$ for
all $t\in \Bbb{R}^p$, thus $ siCov^{(\alpha)}(X, Y)= 0$. Conversely, if  $X$ and $Y$ are  not sub-independent, then there exists an open set $B\subset\Bbb{R}^p$ such that
$|\phi_{X,Y}(t,t)-\phi_X(t)\phi_Y(t)|^2>0$ for all $t\in B$, since
characteristic functions are continuous. Hence, $ siCov^{(\alpha)}(X, Y)>0$.\\
(2)  $ siCov^{(\alpha)}(X)=0$ iff  $\phi_{X,X}(t,t)= \phi_{X}(t)\phi_{X}(t)$ for
all $t\in \Bbb{R}^p$, hence $X$ is a constant vector $EX$, almost surely.\\
(3) For all constant vectors $a_1,a_2\in\Bbb{R}^p$,   scalar $b$ and $p\times p$ orthonormal matrix $C$, we have
 \begin{eqnarray*}
   &&siCov^{(\alpha)}(a_1+bCX, a_2+bCY)\\
   &=&\int_{\Bbb{R}^{p}}|Ee^{it'(bCX+bCY)}- Ee^{it'(bCX)} Ee^{it'(bCY)}|^2 \rho_{\alpha}(t)dt\\
  &=& \int_{\Bbb{R}^{p}}|Ee^{i(bC't)'(X+Y)}- Ee^{i(bC't)'X} Ee^{i(bC't)'Y}|^2 \rho_{\alpha}(t)dt\\
  &=& |b|\int_{\Bbb{R}^{p}}|Ee^{is'(X+Y)}- Ee^{is'X} Ee^{is'Y}|^2 \rho_{\alpha}(s)ds\\
  &=&|b| siCov^{(\alpha)}(X, Y),
 \end{eqnarray*}
 where in the last equality we have used the transformation $s=bC't$.\\
 (4) This is a special case of (3) for   $a_1=a_2$ and $Y=X$. Statement (5) is obvious. $\hfill\square$

Analogous properties to the product-moment correlation coefficient for  sub-independent correlation  are established in Theorem 2.2.

\begin{theorem} If $X, Y$  are  $\Bbb{R}^p$-valued random variables, and $E|X|^{\alpha}_p+E|Y|^{\alpha}_p<\infty$, then,  the following properties hold:

 (i) $0\le siCor^{(\alpha)}(X,Y)\le 1$;

 (ii) If $siCor^{(\alpha)}(X,Y)= 0$ for some $\alpha$, then  $X$ and $Y$ are sub-independent; Conversely, if   $X$ and $Y$ are  sub-independent, then for any $\alpha>0$ we have
  $siCov^{(\alpha)}(X, Y)=0$.

 (iii)   $siCor^{(\alpha)}(a_1+bCX, a_2+bCY)=1$ for all constant vectors $a_1,a_2\in\Bbb{R}^p$,   scalar $b$ and   $p\times p$ orthonormal matrix $C$.
\end{theorem}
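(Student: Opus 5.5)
For (i) and (ii) I will use only the pointwise bound established just before Theorem 2.1. For (iii) I will show that numerator and denominator of $siCor^{(\alpha)}$ pick up the same factor under the affine map, so that it cancels.

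For (i), nonnegativity is immediate: $siCov^{(\alpha)}(X,Y)\ge 0$ as the integral of a nonnegative function, and the denominator is positive whenever it is not set to zero by convention. For the upper bound I integrate the Cauchy--Schwarz chain already displayed in the paper,
$$|\phi_{X,Y}(t,t)-\phi_X(t)\phi_Y(t)|^2\le (1-|\phi_X(t)|^2)(1-|\phi_Y(t)|^2),$$
against $\rho_\alpha(t)\,dt$. This gives numerator $\le$ denominator, hence $siCor^{(\alpha)}(X,Y)\le 1$. Finiteness of both sides follows from Lemma 2.1 under $E|X|^\alpha_p+E|Y|^\alpha_p<\infty$.

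For (ii), $siCor^{(\alpha)}(X,Y)=0$ with a positive denominator forces $siCov^{(\alpha)}(X,Y)=0$, so Theorem 2.1(1) gives sub-independence. If the denominator is zero, then by Remark 2.1 one of $X,Y$ is a.s.\ constant. A constant $c$ is trivially sub-independent of anything, because $Ee^{i\langle t,c+Y\rangle}=e^{i\langle t,c\rangle}\phi_Y(t)$. The converse direction is Theorem 2.1(5).

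For (iii), I set $X^*=a_1+bCX$ and $Y^*=a_2+bCY$ with $b\neq 0$ (for $b=0$ both vectors are constant and the index is $0$ by definition).
\begin{itemize}
\item \emph{Phases.} $\phi_{X^*,Y^*}(t,t)=e^{i\langle t,a_1+a_2\rangle}\phi_{X,Y}(bC^Tt,bC^Tt)$ and $\phi_{X^*}(t)=e^{i\langle t,a_1\rangle}\phi_X(bC^Tt)$, and similarly for $Y^*$. The unimodular phases drop out of every modulus, in both the numerator and the denominator.
\item \emph{Change of variables.} I substitute $s=bC^Tt$, so $|s|_p=|b||t|_p$ by orthogonality of $C$ and $dt=|b|^{-p}ds$. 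Then $\rho_\alpha(t)\,dt=|b|^{\alpha}\rho_\alpha(s)\,ds$.
\item \emph{Consequence.} The same factor $|b|^\alpha$ multiplies both the numerator $siCov^{(\alpha)}$ (the computation of Theorem 2.1(3)) and the denominator $\int(1-|\phi_{X^*}|^2)(1-|\phi_{Y^*}|^2)\rho_\alpha$. The precise power is immaterial, since it is identical in both.
\end{itemize}
Hence $siCor^{(\alpha)}(a_1+bCX,a_2+bCY)=siCor^{(\alpha)}(X,Y)$: the index is invariant under common translations, scalings and orthogonal rotations.

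The main obstacle is the wording of (iii). Read literally as ``$=1$'' for arbitrary $X,Y$, the claim fails. If $X,Y$ are independent and non-degenerate, then $X^*,Y^*$ are independent, hence sub-independent, so the index is $0$ by (ii). What the change-of-variables argument actually delivers is the invariance identity above. I would therefore prove that identity and state (iii) in that corrected form: the ratio $siCor^{(\alpha)}(a_1+bCX,a_2+bCY)/siCor^{(\alpha)}(X,Y)$ equals $1$ whenever $siCor^{(\alpha)}(X,Y)>0$.
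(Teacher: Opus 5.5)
Your proof is correct, and for (i) and (ii) it is the paper's argument: integrate the pointwise Cauchy--Schwarz bound against $\rho_\alpha$, and reduce (ii) to Theorem 2.1. In the zero-denominator case of (ii) you can skip Remark 2.1, which the paper states without proof. Since $0\le siCov^{(\alpha)}(X,Y)\le\int(1-|\phi_X|^2)(1-|\phi_Y|^2)\rho_\alpha\,dt$, a vanishing denominator already forces $siCov^{(\alpha)}(X,Y)=0$, and Theorem 2.1(1) applies.

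For (iii) your diagnosis is right. As written, the claim fails for every independent non-degenerate pair. It also fails for $b=0$, where the index is $0$ by convention. The paper's entire proof is that (iii) ``follows from Theorem 2.1 (iii) and (iv)'', i.e.\ from items (3) and (4) there, the scaling of $siCov^{(\alpha)}(X,Y)$ and of $siCov^{(\alpha)}(X)$. This confirms that the intended claim is the invariance $siCor^{(\alpha)}(a_1+bCX,a_2+bCY)=siCor^{(\alpha)}(X,Y)$ for $b\neq0$, so your reading is the right one.

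Here your route is more complete than the paper's. Theorem 2.1(4) concerns $siCov^{(\alpha)}(X)=siCov^{(\alpha)}(X,X)$, which does not occur in the denominator of $siCor^{(\alpha)}$; Remark 2.1 explicitly rejects normalising by $\sqrt{siCov^{(\alpha)}(X)\,siCov^{(\alpha)}(Y)}$. So the cited results say nothing about how $\int(1-|\phi_X|^2)(1-|\phi_Y|^2)\rho_\alpha\,dt$ transforms. Your substitution $s=bC^Tt$ in both integrals is exactly the missing step.

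Your computation also produces the correct factor $|b|^{\alpha}$. Theorem 2.1(3) asserts $|b|$, which is right only for $\alpha=1$. Because the factor cancels between numerator and denominator, this slip does not affect (iii).

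If one wants a genuine ``$=1$'' statement, it is the one for pairs with a.s.\ constant sum. Take $X$ non-degenerate, $b\neq0$, and $a_2-bCX$ in place of $a_2+bCY$. Then both integrands equal $(1-|\phi_X(bC^Tt)|^2)^2$, which matches $siCor_{-1}(X,Y)=1$ in Section 4.1. That would need its own argument, not the scaling results the paper cites.
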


{\bf Proof}. The statement $ siCor^{(\alpha)}(X,Y)\ge 0$ is obvious. The only thing to show is that
  $ siCor^{(\alpha)}(X,Y)\le 1$.  We just note that
\begin{eqnarray*}
 siCov^{(\alpha)}(X,Y)&=&\int_{\Bbb{R}^{p}}|\phi_{X,Y}(t,t)-\phi_X(t)\phi_Y(t)|^2 \rho_{\alpha}(t)dt\\
&\le& \int_{\Bbb{R}^{p}}(1-|\phi_X(t)|^2)(1-|\phi_Y(t)|^2)\rho_{\alpha}(t)dt,
\end{eqnarray*}
this proves (i). Statement (ii) is obvious.

(iii)   This follows from   Theorem 2.1 (iii) and (iv). $\hfill\square$

\begin{theorem} Suppose that $(X_1, Y_1),\cdots, (X_4, Y_4)$ are independent copies of $(X, Y)\in \Bbb{R}^{p\times p}$.
If $E|X|^{\alpha}_p+E|Y|^{\alpha}_p<\infty$, then
\begin{eqnarray}
  siCov^{(\alpha)}(X, Y) &=& 2E|X_1+Y_1-X_2-Y_3|^{\alpha}_p-E|X_1+Y_2-X_3-Y_4|^{\alpha}_p\nonumber\\
&&-E|X_1+Y_1-X_2-Y_2|^{\alpha}_p,
\end{eqnarray}
\begin{eqnarray*}
  siCov^{(\alpha)}(X) &=& 2E|2X_1-X_2-X_3|^{\alpha}_p-E|X_1+X_2-X_3-X_4|^{\alpha}_p\\
&&-2E|X_1-X_2|^{\alpha}_p,
\end{eqnarray*}
and
\begin{eqnarray*}
  siCov^{(\alpha)}(Y) &=& 2E|2Y_1-Y_2-Y_3|^{\alpha}_p-E|Y_1+Y_2-Y_3-Y_4|^{\alpha}_p\\
&&-2E|Y_1-Y_2|^{\alpha}_p.
\end{eqnarray*}
\end{theorem}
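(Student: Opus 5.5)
The plan is to expand the squared modulus in the definition of $siCov^{(\alpha)}(X,Y)$ into three real terms, each of the form $1-E\cos\langle t,Z\rangle$ for a suitable linear combination $Z$ of the independent copies, and then apply Lemma 2.1 term by term. Write $A(t)=|\phi_{X+Y}(t)|^2$, $B(t)=\mathrm{Re}\,\bigl(\phi_{X+Y}(t)\overline{\phi_X(t)\phi_Y(t)}\bigr)$ and $C(t)=|\phi_X(t)|^2|\phi_Y(t)|^2$. Then
$$|\phi_{X,Y}(t,t)-\phi_X(t)\phi_Y(t)|^2=A-2B+C=-(1-A)+2(1-B)-(1-C),$$
because the constant terms cancel ($-1+2-1=0$).

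Next I will identify each term as a characteristic function of a combination of the copies $(X_j,Y_j)$, $j=1,\dots,4$:
\begin{itemize}
\item $A(t)=E\cos\langle t,X_1+Y_1-X_2-Y_2\rangle$;
\item $B(t)=E\cos\langle t,X_1+Y_1-X_2-Y_3\rangle$, using independence of the pair $(X_1,Y_1)$ from $X_2$ and $Y_3$;
\item $C(t)=E\cos\langle t,X_1+Y_2-X_3-Y_4\rangle$.
\end{itemize}
In each case the expectation of the exponential is real, or its real part is taken, so it equals the expectation of the cosine.

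Then I integrate against $\rho_\alpha(t)$. Each of $1-A$, $1-B$, $1-C$ is of the form $1-E\cos\langle t,Z\rangle\ge 0$. The relevant $Z$ satisfies $E|Z|^\alpha_p\le 4^{\alpha}(E|X|^\alpha_p+E|Y|^\alpha_p)<\infty$ (via $|a+b+c+d|^\alpha\le 4^\alpha\max|\cdot|^\alpha$). So Lemma 2.1 (Tonelli applies since the integrands are nonnegative) gives $\int(1-E\cos\langle t,Z\rangle)\rho_\alpha(t)dt=E|Z|^\alpha_p<\infty$. Since all three integrals are finite, linearity of the integral is legitimate, and summing with coefficients $-1,2,-1$ yields (2.5) exactly.

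For $siCov^{(\alpha)}(X)$ I set $Y=X$, taking the copies $Y_j=X_j$. The three combinations become $2X_1-2X_2$, $2X_1-X_2-X_3$ and $X_1+X_2-X_3-X_4$. The first gives $E|2(X_1-X_2)|^\alpha_p=2^\alpha E|X_1-X_2|^\alpha_p$, which equals the stated $2E|X_1-X_2|^\alpha_p$ precisely when $\alpha=1$. The main point to check carefully is therefore this scaling constant: I would either record the general coefficient $2^\alpha$ or note that the displayed formula is the case $\alpha=1$. The case of $Y$ is identical. The only genuinely delicate step is justifying the term-by-term splitting of the integral, which is handled by the finiteness of each piece as above.
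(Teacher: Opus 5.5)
Your proposal is correct and follows essentially the paper's route: expand the squared modulus, recognize each product of characteristic functions as the characteristic function of a linear combination of the independent copies, and apply Lemma 2.1 term by term. Two details of your write-up are slightly more careful than the paper's: pairing the two conjugate cross terms into $2(1-B)$ so that only cosines appear, and checking that each piece is finite before splitting the integral. (Your moment constant should be $2\cdot 4^{\alpha}$ rather than $4^{\alpha}$, which is immaterial.)

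Your scaling remark is also correct, and the paper's ``the proofs of other parts are similarly'' overlooks it. With $Y=X$ the third combination is $2X_1-2X_2$, giving $2^{\alpha}E|X_1-X_2|^{\alpha}_p$. So the displayed formulas for $siCov^{(\alpha)}(X)$ and $siCov^{(\alpha)}(Y)$ hold as written only for $\alpha=1$, which is the only case used later (Section 4); for general $\alpha$ the coefficient $2$ must be replaced by $2^{\alpha}$.
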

{\bf Proof}. We provide the proof for (2.4), the proofs of other parts are  similarly.
 Clearly,
\begin{eqnarray*}
\phi_{X,Y}(t,t)\overline{\phi_{X,Y}(t,t)}&=&E e^{it'(X+Y)}E e^{-it'(X+Y)}\\
&=&E e^{it'(X+Y)}E e^{-it'(X_2+Y_2)}\\
&=& E e^{it'(X_1+Y_1-X_2-Y_2)},
\end{eqnarray*}
\begin{eqnarray*}
\phi_{X,Y}(t,t)\overline{\phi_X(t)\phi_Y(t)} &=&E e^{it'(X+Y)} E e^{-it'(X_2+Y_3)}\\
&=&E e^{it'(X_1+Y_1-X_2-Y_3)},
\end{eqnarray*}
\begin{eqnarray*}
\overline{\phi_{X,Y}(t,t)}\phi_X(t)\phi_Y(t)&=&E e^{-it'(X+Y)} E e^{it'(X_2+Y_3)}\\
&=&E e^{-it'(X_1+Y_1-X_2-Y_3)},
\end{eqnarray*}
\begin{eqnarray*}
\phi_X(t)\phi_Y(t)\overline{\phi_X(t)\phi_Y(t)}& =&E e^{it'(X_1+Y_2)} E e^{-it'(X_3+Y_4)}\\
&=&E e^{it'(X_1+Y_2-X_3-Y_4)}.
\end{eqnarray*}
Then, by applying Lemma 2.1, we obtain
\begin{eqnarray*}
  siCov^{(\alpha)}(X, Y)&=&\int_{\Bbb{R}^p}(\phi_{X,Y}(t,t)-\phi_X(t)\phi_Y(t))\overline{(\phi_{X,Y}(t,t)-\phi_X(t)\phi_Y(t))} \rho_{\alpha}(t)dt\\
 &=&-\int_{\Bbb{R}^p}[\phi_{X,Y}(t,t)\overline{\phi_X(t)\phi_Y(t)}+\overline{\phi_{X,Y}(t,t)}\phi_X(t)\phi_Y(t)] \rho_{\alpha}(t)dt\\ &&+\int_{\Bbb{R}}[\phi_X(t)\phi_Y(t)\overline{\phi_X(t)\phi_Y(t)}+\phi_{X,Y}(t,t)\overline{\phi_{X,Y}(t,t)}] \rho_{\alpha}(t)dt\\
&=&\int_{\Bbb{R}^p}\left(1-\phi_{X,Y}(t,t)\overline{\phi_X(t)\phi_Y(t)}\right)\rho_{\alpha}(t)dt\\
&&+\int_{\Bbb{R}^p}\left(1-\overline{\phi_{X,Y}(t,t)}\phi_X(t)\phi_Y(t)\right) \rho_{\alpha}(t)dt\\
&&-\int_{\Bbb{R}^p}\left(1-\phi_X(t)\phi_Y(t)\overline{\phi_X(t)\phi_Y(t)}\right)\rho_{\alpha}(t)dt\\
&&-\int_{\Bbb{R}^p}\left(1-\phi_{X,Y}(t,t)\overline{\phi_{X,Y}(t,t)}\right) \rho_{\alpha}(t)dt\\
&=&\int_{\Bbb{R}^p}\left(1- E e^{it'(X_1+Y_1-X_2-Y_3)}\right)\rho_{\alpha}(t)dt\\
&&+\int_{\Bbb{R}^p}\left(1- E e^{-it'(X_1+Y_1-X_2-Y_3)}\right) \rho_{\alpha}(t)dt\\
&&-\int_{\Bbb{R}^p}\left(1- E e^{it'(X_1+Y_2-X_3-Y_4)}\right)\rho_{\alpha}(t)dt\\
&&-\int_{\Bbb{R}^p}\left(1- E e^{it'(X_1+Y_1-X_2-Y_2)}\right) \rho_{\alpha}(t)dt\\
&=& 2E|X_1+Y_1-X_2-Y_3|^{\alpha}_p-E|X_1+Y_2-X_3-Y_4|^{\alpha}_p\\
&&-E|X_1+Y_1-X_2-Y_2|^{\alpha}_p.
\end{eqnarray*}
This completes the proof.  $\hfill\square$

The next theorem gives a equivalent representation of  $siCor(X,Y)$.

\begin{theorem}  Suppose that $(X_1, Y_1),\cdots, (X_4, Y_4)$ are independent, identically
distributed  copies of $(X, Y)\in \Bbb{R}^{p\times p}$,   if $E(|X|^{\alpha}_p+|Y|^{\alpha}_p)<\infty$, then
\begin{eqnarray}
siCor^{(\alpha)}(X,Y)=\frac{2{\cal{J}}_1(X,Y)-{\cal{J}}_2(X,Y)-{\cal{J}}_3(X,Y) }{{\cal{K}}_1(X,Y)+{\cal{K}}_2(X,Y)-{\cal{K}}_3(X,Y) },
\end{eqnarray}
where
\begin{eqnarray*}
{\cal{J}}_1(X,Y)&=&E|X_1+Y_1-X_2-Y_3|^{\alpha}_p,\\
{\cal{J}}_2(X,Y)&=&E|X_1+Y_2-X_3-Y_4|^{\alpha}_p,\\
{\cal{J}}_3(X,Y)&=&E|X_1+Y_1-X_2-Y_2|^{\alpha}_p,
\end{eqnarray*}
and
\begin{eqnarray*}
{\cal{K}}_1(X,Y)&=&E|X_1-X_2|^{\alpha}_p,\\
{\cal{K}}_2(X,Y)&=&E|Y_3-Y_4|^{\alpha}_p, \\
{\cal{K}}_3(X,Y)&=&E|X_1-X_2+X_3-Y_4|^{\alpha}_p.
\end{eqnarray*}
\end{theorem}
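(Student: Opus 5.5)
The plan is to treat numerator and denominator of the defining ratio (2.3) separately. The numerator is exactly $siCov^{(\alpha)}(X,Y)$, and Theorem 2.3 already gives it as $2{\cal J}_1-{\cal J}_2-{\cal J}_3$. So the only new work is to express the normalizing integral
$$D:=\int_{\Bbb{R}^p}(1-|\phi_X(t)|^2)(1-|\phi_Y(t)|^2)\rho_\alpha(t)\,dt$$
through moments. Then (2.5) follows by dividing, whenever $D>0$.

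For $D$ I will use the algebraic identity $(1-a)(1-b)=(1-a)+(1-b)-(1-ab)$ with $a=|\phi_X(t)|^2$ and $b=|\phi_Y(t)|^2$. Each factor is a characteristic function of a symmetrized difference built from independent copies:
\begin{eqnarray*}
|\phi_X(t)|^2&=&Ee^{it'(X_1-X_2)},\qquad |\phi_Y(t)|^2=Ee^{it'(Y_3-Y_4)},\\
|\phi_X(t)|^2|\phi_Y(t)|^2&=&Ee^{it'(X_1-X_2+Y_3-Y_4)}.
\end{eqnarray*}
The last line uses the independence of $(X_1,Y_1),\dots,(X_4,Y_4)$, exactly as in the proof of Theorem 2.3. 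Applying Lemma 2.1 to each of the three terms then gives
$$D=E|X_1-X_2|_p^\alpha+E|Y_3-Y_4|_p^\alpha-E|X_1-X_2+Y_3-Y_4|_p^\alpha={\cal K}_1+{\cal K}_2-{\cal K}_3 .$$

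Reading ${\cal K}_3$ off this computation, it should be $E|X_1-X_2+Y_3-Y_4|^\alpha_p$. The printed $X_3$ looks like a typographical slip, and I would state the corrected form.

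The one point needing care is that splitting $\int(1-a)(1-b)\rho_\alpha$ into three separate integrals is legitimate only if each piece is finite. Each integrand is nonnegative, since $1-\Re\phi\ge0$ for a real (symmetric) characteristic function. By Lemma 2.1, each piece therefore equals $c(p,\alpha)^{-1}$ times $c(p,\alpha)$ times an $\alpha$-th moment of a sum of at most four copies. These moments are finite under $E|X|^\alpha_p+E|Y|^\alpha_p<\infty$ by the $c_r$-inequality $|\sum_{k=1}^4 z_k|^\alpha\le 4^{\max(\alpha-1,0)}\sum_{k=1}^4|z_k|^\alpha$. This finiteness check, together with correctly matching indices of independent copies in the product $|\phi_X|^2|\phi_Y|^2$, is the main (mild) obstacle. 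Everything else is a direct transcription of the argument of Theorem 2.3.
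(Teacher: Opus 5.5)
Your proposal is correct and matches the paper's proof: the numerator comes from Theorem 2.3, and the normalizing integral is split via $(1-a)(1-b)=(1-a)+(1-b)-(1-ab)$, with copies $1,2$ carrying $X$ and copies $3,4$ carrying $Y$, before Lemma 2.1 is applied termwise. You are also right that ${\cal K}_3$ should read $E|X_1-X_2+Y_3-Y_4|^{\alpha}_p$: the paper's own computation integrates $1-e^{it(X_1-X_2+Y_3-Y_4)}$ and then misprints $X_3$ in the last line (for constant $Y$ and Gaussian $X$ with $\alpha=1$, the printed version even gives a negative denominator), and your nonnegativity/finiteness check for the split and the $D>0$ caveat are details the paper leaves implicit.
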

{\bf Proof}.  ${\cal{J}}_1(X,Y),
{\cal{J}}_2(X,Y),$
and  ${\cal{J}}_3(X,Y)$  can be obtained by (2.4).  For the convenience of writing, setting $\triangle=\int_{\Bbb{R}^{p}}(1-|\phi_X(t)|^2)(1-|\phi_Y(t)|^2)\rho_{\alpha}(t)dt$, then
\begin{eqnarray*}
 \triangle&=&\int_{\Bbb{R}^{p}}(1-e^{it(X_1-X_2)})(1- e^{it(Y_3-Y_4)})\rho_{\alpha}(t)dt\\
&=&\int_{\Bbb{R}^{p}}(1-e^{it(X_1-X_2)}) \rho_{\alpha}(t)dt\\
&&+\int_{\Bbb{R}^{p}}(1-e^{it(Y_3-Y_4)}) \rho_{\alpha}(t)dt\\
&&- \int_{\Bbb{R}^{p}}(1-e^{it(X_1-X_2+Y_3-Y_4)}) \rho_{\alpha}(t)dt\\
&=& E|X_1-X_2|^{\alpha}_p+E|Y_3-Y_4|^{\alpha}_p -E|X_1-X_2+X_3-Y_4|^{\alpha}_p\\
&=& {\cal{K}}_1(X,Y)+{\cal{K}}_2(X,Y)-{\cal{K}}_3(X,Y).
\end{eqnarray*}
Hence, the theorem.  $\hfill\square$

\section{Comparisons    with known dependencies}\label{intro}

The classical Pearson product-moment correlations between two variables have been generalized to measure association between two   variables  in many ways. For example, the
distance covariance (dCov) coefficient, martingale difference correlation (Shao and Zhang (2014)) and sub-independent coefficient.  A large number of literature uses these coefficients to test whether two random vectors are correlated with each other. If so, it is important to reveal the patterns present in these associations. We  discuss the relationship among  Pearson product-moment correlation, distance covariance coefficient,  sub-independent coefficient and  Positive Orthant Dependence (POD) and
Negative Orthant Dependence (NOD)    dependence. For details about POD (NOD) and
other notions of dependence, see, e.g.,   Barlow and Proschan (1975).

Ebrahimi et al. (2010) pointed out that POD (NOD) are the weakest among all existing notions of dependence. We   remark that the sub-independence and POD (NOD) there are no logical implication relationship. To see that, let $X=(X_1,\cdots,X_n)$ be a nonnegative random vector in $\Bbb{R}^n$, then, for $t\ge 0$,
\begin{eqnarray*}
\phi_{\sum_{k=1}^n X_{k}}(t)&-&\prod_{i=1}^n\phi_{X_{i}}(t)\\
&=&\int_0^{\infty}\cdots \int_0^{\infty}e^{-t\sum_{i=1}^{n}x_i}\left(\bar{F}(x_1,\cdots,x_n)-\prod_{i=1}^n\bar{F_i}(x_i)\right)dx_1\cdots x_n,
\end{eqnarray*}
from which we  cannot be reduced that there is a mutual inclusion relationship between  POD and sub-independence. Namely, there exist random variable pairs that satisfy POD but do not satisfy sub-independence (for example, variables that are strongly positively correlated but the distribution of their sum is not in a convolutional form).
        There are also cases where sub-independence is satisfied but POD is not (for example, the distribution of the sum conforms to convolution, but the tail exhibits negative dependence). In the SUM distribution class, POD+sub-independence $\Rightarrow$ independence.

For rvs $U$and $V$,  Shao and Zhang (2014) proposed a martingale
difference correlation for high-dimensional variable screening,  they called $V-E(V)$ a martingale
difference with respect to $U$ if $E(V|U)=E(V)$. This  relationship lies in between independence and uncorrelatedness, and neither of this relation and sub-independence implies the other one. Namely, The  martingale difference correlation and sub-independence  have no inclusion relationship in the mathematical framework.

Edelmann et al. (2020) asserted that ``$|{\rm dCor}(X,Y)|=1$ if and only if   $Y$ is a linear function of $X$, almost surely".
 Edelmann et al. (2021)also asserted that ``$|{\rm Cor}(X,Y)|=1$ if and only if   ${\rm dCor}^2(X,Y) =1$".



 The following result shows that  the  sub-independence lies in between independence and uncorrelatedness.

\begin{theorem} If $X, Y$  are  real-valued random variables, and $EX^2+EY^2<\infty$, then

 (i) ${\rm dCor}^2(X,Y) =0 \Rightarrow siCor(X,Y)=0 \Rightarrow  {\rm Cor}(X,Y)=0,$ but the converse is not true.

 (ii)   ${\rm Cor}(X,Y) \neq 0 \Rightarrow siCor(X,Y) \neq 0 \Rightarrow  {\rm dCor}^2(X,Y)\neq 0$.

  (iii) If $Y=X+a$ for constant  $a$, then  $siCor (X,Y)=1$.
\end{theorem}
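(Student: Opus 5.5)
The plan is to handle the chain of implications in (i) through characteristic functions, using the characterization results already proved. Part (ii) is then the contrapositive of (i), and part (iii) is a direct application of Theorem 2.2.

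\emph{First implication in (i).} By Szekely et al. (2007), ${\rm dCor}^2(X,Y)=0$ holds iff $X$ and $Y$ are independent. Independence gives $\phi_{X,Y}(s,t)=\phi_X(s)\phi_Y(t)$ for all $s,t$. Restricting to the diagonal $s=t$ yields $\phi_{X,Y}(t,t)=\phi_X(t)\phi_Y(t)$, so $siCov(X,Y)=0$ by Theorem 2.1(1), and hence $siCor(X,Y)=0$.

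\emph{Second implication in (i).} By Theorem 2.2(ii), $siCor(X,Y)=0$ gives sub-independence, that is, $\phi_{X+Y}(t)=\phi_X(t)\phi_Y(t)$ for all $t\in\Bbb{R}$. Since $EX^2+EY^2<\infty$, both sides are twice differentiable at $0$. Comparing first derivatives gives $E(X+Y)=EX+EY$, which is trivial. Comparing second derivatives gives
$$-E(X+Y)^2=-\bigl(EX^2+2EX\,EY+EY^2\bigr),$$
so $E(XY)=EX\,EY$, i.e. ${\rm Cov}(X,Y)=0$. If a variable is degenerate, ${\rm Cor}$ is $0$ by convention.

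\emph{Failure of the converses.} Two explicit counterexamples will be exhibited.

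For "sub-independent but not independent", take $X,Y\in\{0,1,2\}$ with joint pmf $p_{jk}=1/9+\varepsilon D_{jk}$, where $0<\varepsilon\le 1/9$ and
$$D=\begin{pmatrix}0&1&-1\\-1&0&1\\1&-1&0\end{pmatrix}.$$
Every row sum, every column sum and every anti-diagonal sum $\sum_{j+k=m}D_{jk}$ ($m=0,\dots,4$) of $D$ vanishes. So the marginals are uniform and the law of $X+Y$ equals the convolution of the marginals. Thus the pair is sub-independent, yet $p_{01}\neq 1/9$, so it is not independent. By Theorem 2.2(ii), $siCor=0$ while ${\rm dCor}^2>0$.

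For "uncorrelated but not sub-independent", take $X$ uniform on $\{-1,0,1\}$ and $Y=X^2$. Then ${\rm Cov}(X,Y)=EX^3=0$. However, $X+Y$ is supported on $\{0,2\}$, whereas the convolution of the marginals puts mass $P(X=-1)P(Y=0)=1/9$ at $-1$. Hence $siCov>0$ by Theorem 2.1(1).

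\emph{Part (ii).} This follows by taking contrapositives of the two implications in (i).

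\emph{Part (iii).} I plan to invoke Theorem 2.2(iii) with $b=1$, $C=I$, $a_1=0$, $a_2=a$, applied to the pair $(X,X)$. The main obstacle is that Theorem 2.2(iii) is stated somewhat loosely: directly, $siCov(X,X+a)=\int|\phi_X(2t)-\phi_X(t)^2|^2\rho_1(t)\,dt$, while the denominator of $siCor$ is $\int(1-|\phi_X(t)|^2)^2\rho_1(t)\,dt$. The Cauchy--Schwarz step preceding Theorem 2.1 becomes an equality only when $e^{itX}-\phi_X(t)$ and $e^{-itX}-\phi_X(-t)$ are proportional. So I would first verify this identity carefully. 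If it does not hold in general, (iii) should be read as stated in Theorem 2.2(iii), with the restriction to nondegenerate $X$ needed for the denominator to be positive.
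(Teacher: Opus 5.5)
\textbf{Parts (i) and (ii).} These are correct and follow the paper's argument. The paper uses the same chain $\phi_{X,Y}(s,t)=\phi_X(s)\phi_Y(t)\Rightarrow\phi_{X,Y}(t,t)=\phi_X(t)\phi_Y(t)\Rightarrow E(XY)=EX\,EY$, and gets (ii) by contraposition. Your second-derivative comparison at $t=0$ supplies the last step, which the paper only asserts.

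Both of your counterexamples are valid: the anti-diagonal sums of $D$ do vanish, and $X+X^2$ misses the atom at $-1$ that the convolution has. The paper only points to Section 4 and to textbooks. The Section 4 examples do not actually show the converses fail: in the normal family all three coefficients vanish exactly at $\rho=0$, and the Cauchy pair violates the moment assumption.

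\textbf{Part (iii).} Here there is a genuine gap, and it cannot be closed because (iii) is false as stated. Falling back on Theorem 2.2(iii) proves nothing. That item asserts the same identity, and its one-line justification (the scaling rules in Theorem 2.1(3)--(4)) says nothing about the ratio being $1$. The paper's proof of the present theorem does not treat (iii) at all.

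You correctly reduced (iii) to $|\phi_X(2t)-\phi_X(t)^2|=1-|\phi_X(t)|^2$ for all $t$, and this fails in general. Take $X\sim N(0,1)$ and $a=0$. Then $\phi_X(t)^2-\phi_X(2t)=e^{-t^2}(1-e^{-t^2})$. So the numerator integrand $e^{-2t^2}(1-e^{-t^2})^2$ lies strictly below the denominator integrand $(1-e^{-t^2})^2$ for every $t\neq 0$, and hence $siCor(X,X)<1$. The paper's own Section 4.1 gives $siCor_1=(\sqrt6-\sqrt2-1)/(\sqrt2-1)\approx 0.085$.

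More precisely, take nondegenerate $X$ and suppose equality holds in your Cauchy--Schwarz step for all $t$. Then $e^{itX}-\phi_X(t)$ is a.s.\ a fixed complex multiple of a real variable, so $\sin(tX-\theta_t)$ is a.s.\ constant for some phase $\theta_t$. Thus $tX \bmod 2\pi$ takes at most two values, and for small $t\neq 0$ this forces $X$ to be two-point. Conversely, every two-point $X$ gives equality, since then $e^{itX}-\phi_X(t)$ is a multiple of $\mathbf{1}_{\{X=x_1\}}-p$.

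What does hold for every nondegenerate $X$ is the reflected statement. If $Y=a-X$, then $X+Y\equiv a$ and $\phi_X(t)\phi_Y(t)=e^{ita}|\phi_X(t)|^2$. Both integrands then equal $(1-|\phi_X(t)|^2)^2$, so $siCor(X,a-X)=1$, which matches $siCor_{-1}=1$ in Section 4.1. The right outcome for (iii) is therefore a counterexample together with this corrected version, not a proof.
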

{\bf Proof}. (i) The proof is straightforward. In fact, ${\rm dCor}^2(X,Y) =0  \Leftrightarrow  \phi_{X,Y}(s,t)= \phi_{X}(s)\phi_{Y}(t)$ for
all $s,t\in \Bbb{R}$ $\Rightarrow   \phi_{X,Y}(t,t)= \phi_{X}(t)\phi_{Y}(t)$ for
all $t\in \Bbb{R}$ $\Rightarrow E(XY)=E(X)E(Y)$ $ \Leftrightarrow  {\rm Cor}(X,Y)=0$.
The example  in the next section illustrates the converse of (i) is not true.

It is not difficult
to find examples from standard textbooks that there are random
vectors such that  ${\rm Cor}(X,Y)=0$ holds but   $siCor(X,Y)=0$ is violated, or  $siCor(X,Y)=0$  holds but  ${\rm dCor}^2(X,Y) =0$
is not satisfied.

Statement (ii) is easily obtained by (i).  $\hfill\square$

\section{Two examples}

This section provides $siCov (X, Y)$ and $siCor (X, Y)$ for  two-dimensional normal distribution and two-dimensional  Cauchy distribution.

\subsection{ Results for the bivariate normal distribution  }\label{intro}

Let $(X,Y)$  have    the bivariate standard normal distribution $N_2(0,0,1,1,\rho)$.  Suppose that $(X_1, Y_1),\cdots, (X_4, Y_4)$ are independent, identically
distributed  copies of $(X, Y)$,  then
\begin{eqnarray*}
&&X_1+Y_1-X_2-Y_3\sim N(0, 4+2\rho),\,  X_1+Y_2-X_3-Y_4\sim N(0, 4),\\
&&X_1+Y_1-X_2-Y_2\sim N(0, 4(1+\rho)),
\end{eqnarray*}
and hence
\begin{eqnarray*}
&&E|X_1+Y_1-X_2-Y_3|=\sqrt{4+2\rho}\sqrt{\frac{2}{\pi}}, \,  E|X_1+Y_2-X_3-Y_4|=2\sqrt{\frac{2}{\pi}},\\
&& E|X_1+Y_1-X_2-Y_2|=2\sqrt{1+\rho}\sqrt{\frac{2}{\pi}}.
\end{eqnarray*}
By making use of Theorem 2.3 yields
\begin{eqnarray*}
 siCov(X, Y)=2\sqrt{4+2\rho}\sqrt{\frac{2}{\pi}}-2\sqrt{\frac{2}{\pi}}
-2\sqrt{1+\rho}\sqrt{\frac{2}{\pi}}.
\end{eqnarray*}
Similarly,
\begin{eqnarray*}
  siCov(X, X)=  siCov(Y, Y) =2\sqrt{\frac{2}{\pi}}(\sqrt{6}-\sqrt{2}-1),
\end{eqnarray*}
and $${\cal{K}}:=E|X_1-X_2|+E|Y_3-Y_4| -E|X_1-X_2+X_3-Y_4|=2\sqrt{\frac{2}{\pi}}(\sqrt{2}-1). $$
Finally, by (2.5), we have
\begin{eqnarray*}
siCor_{\rho}(X,Y):= siCor(X,Y) =\frac{\sqrt{4+2\rho}-\sqrt{1+\rho}-1}{\sqrt{2}-1},
\end{eqnarray*}
from which we get  $siCor(X,Y)\le |\rho|$, and
\begin{eqnarray*}
siCor_{0}(X,Y)=0,\, siCor_{-1}(X,Y)=1,\,  siCor_{1}(X,Y)= \frac{\sqrt{6}-\sqrt{2}-1}{\sqrt{2}-1}\doteq 0.085.
\end{eqnarray*}
\begin{remark} Define
$${\cal{R}_{\rho}}:=\frac{ siCov(X, Y)}{\sqrt{ siCov(X, X)\cdot siCov(Y, Y) }},  $$
then
$${\cal{R}_{\rho}} =\frac{\sqrt{4+2\rho}-\sqrt{1+\rho}-1}{\sqrt{6}-\sqrt{2}-1},  $$
In particular,
$${\cal{R}}_{0}=0, \, {\cal{R}}_{-1}=\frac{\sqrt{2}-1}{\sqrt{6}-\sqrt{2}-1}\doteq 11.74, \, {\cal{R}}_{1}=1. $$
  It turns out that  ${\cal{R}_{\rho}}$ can not satisfied the desire condition $ {\cal{R}_{\rho}}\le 1$.
\end{remark}

\subsection{ Results for the bivariate Cauchy distribution  }\label{intro}

The bivariate Cauchy distribution is a continuous probability distribution used to describe the joint behavior of two random variables, and is an important special case of multivariate heavy tailed distributions. It has special significance in statistical modeling, especially suitable for scenarios with strong correlation between extreme events and where traditional normality assumptions do not hold.
The $2$-dimensional random vector ${\bf X}=(X_1, X_2)'$ is said to have a multivariate Cauchy distribution (denoted by ${\bf X}\sim MC_2({\boldsymbol \mu},{\bf \Sigma})$)
if its probability density function is given by (see Fang, Kotz, and Ng 1990)
\begin{equation*}
\frac{1}{2\pi}|{\bf \Sigma}|^{-\frac12}\left(1+({\bf x}-{\boldsymbol \mu})'|{\bf \Sigma}|^{-1}({\bf x}-{\boldsymbol \mu}\right)^{-\frac{3}{2}}, \, {\bf x}\in \Bbb{R}^2,
\end{equation*}
where ${\boldsymbol \mu} \in \Bbb{R}^2$   is its location parameter and  ${\bf \Sigma}$ is  $2\times 2$  positive semi-definite matrix.
The characteristic function of ${\bf X}\sim MC_2({\boldsymbol \mu},{\bf \Sigma})$ is given by
\begin{equation*}
\phi_{\bf X}({\bf t})=e^{i{\bf t}'{\boldsymbol \mu}}\exp(-\sqrt{{\bf t}'{\bf \Sigma}{\bf t}}),\; {\bf t}\in \Bbb{R}^2.
\end{equation*}
In particular, the standard multivariate Cauchy distribution $MC_2({\bf 0}, {\bf I_2})$  has the density
\begin{equation*}
\frac{1}{2\pi}\left(1+{\bf x}'{\bf x}\right)^{-\frac{3}{2}},\, {\bf x}\in \Bbb{R}^2,
\end{equation*}
and the characteristic function is given by
$$\phi_{(X_1,X_2)}(t_1,t_2)=\exp\left(-\sqrt{t_1^2+t_2^2}\right), \; -\infty<t_1, t_2<\infty,$$
 It follows that
$$\phi_{X_1}(t)=\phi_{X_2}(t)= \exp\left(-|t|\right), \; -\infty<t<\infty,$$
and
$$\phi_{X_1+X_2}(t)=\exp\left(-\sqrt{2}|t|\right), \; -\infty<t<\infty.$$
By (2.1), for $0<\alpha<1$,
\begin{eqnarray*}
 siCov^{(\alpha)}(X_1, X_2)&=&\frac{1}{\pi}\int_{\Bbb{R}}|\exp\left(-\sqrt{2}|t|\right) - \exp\left(-2|t|\right)|^2  |t|^{-1-\alpha}dt\\
 &=&\frac{2}{\pi}\int_{0}^{\infty}|\exp\left(-\sqrt{2}t\right) - \exp\left(-2t\right)|^2  t^{-1-\alpha}dt\\
  &=&\frac{2}{\pi}\int_{0}^{\infty}(e^{-4t} -2e^{-(2+\sqrt{2})t}+e^{-2\sqrt{2}t})  t^{-1-\alpha}dt\\
  &=&\frac{2}{\pi\alpha}\left(-4^{\alpha}+2(2+\sqrt{2})^{\alpha}-(2\sqrt{2})^{\alpha}\right)\Gamma(1-\alpha),
\end{eqnarray*}
and
\begin{eqnarray*}
\int_{\Bbb{R}}(1-|\phi_{X_1}(t)|^2)(1-|\phi_{X_2}(t)|^2)\rho_{\alpha}(t)dt&=&\frac{1}{\pi}\int_{\Bbb{R}}(1-e^{-2|t|})^2 |t|^{-1-\alpha}dt\\
&=&\frac{2}{\pi}\int_{0}^{\infty}(1-e^{-2t})^2 t^{-1-\alpha}dt\\
&=&\frac{8}{\pi\alpha}(2^{\alpha-1}-4^{\alpha-1})\Gamma(1-\alpha).
\end{eqnarray*}
Finally, by (2.2) we obtain
\begin{eqnarray}
	siCor^{(\alpha)}(X,Y) = \frac{-4^{\alpha}+2(2+\sqrt{2})^{\alpha}-(2\sqrt{2})^{\alpha}}{4(2^{\alpha-1}-4^{\alpha-1})}.
\end{eqnarray}
It can be proven that  $siCor^{(\alpha)}$ is monotonically increasing with respect to $\alpha$, and $0< siCor^{(\alpha)}(X,Y)<0.04$.

\section{Estimation and   asymptotic properties  }\label{intro}

In this section, we  give   an unbiased estimate of $siCov^{(\alpha)}(X, Y)$ (denote by $\widehat{siCov^{(\alpha)}}_n(X, Y)$), and derive the asymptotic distribution of $\widehat{siCov^{(\alpha)}}_n(X, Y)$.

Define the symmetric kernel function
\begin{eqnarray*}
k((X_1,Y_1),(X_2,Y_2),(X_3,Y_3),(X_4,Y_4))&=&\frac{1}{12}\sum_{\stackrel{1\le i, j, k\le 4}{i\neq j\neq k}}  |X_i+Y_i-X_j-Y_k|^{\alpha}_p\\
&&-\frac{1}{24}\sum_{\stackrel{1\le i, j, k, l\le 4}{i\neq j\neq k\neq l}}|X_i+Y_j-X_k-Y_l|^{\alpha}_p\\
&&-\frac{1}{24}\sum_{\stackrel{1\le i, j\le 4}{i\neq j}}|X_i+Y_i-X_j-Y_j|^{\alpha}_p.
\end{eqnarray*}
Given $n$ samples $(X_1, Y_1),\cdots, (X_n, Y_n)$ from $(X,Y)$, an  estimator of    $siCov^{(\alpha)}(X,Y)$   can be defined as
\begin{eqnarray*}
 \widehat{siCov^{(\alpha)}}_n(X, Y) =\frac{1}{C_n^4} \sum_{1\le i<j<k<l\le n} k\left((X_i,Y_i),(X_j,Y_j),(X_k,Y_k),(X_l,Y_l)\right).
\end{eqnarray*}
We directly obtain $E(\widehat{siCov^{(\alpha)}}_n(X, Y)) =siCov^{(\alpha)}(X, Y)$. That is  $\widehat{siCov^{(\alpha)}}_n(X, Y)$ is an unbiased estimate of $siCov^{(\alpha)}(X, Y)$.

Using the general theory of  $U$-statistics (see, e.g.  Serfling (1980, P. 190, Theorem A)) we have the following  result.

\begin{theorem}  Suppose that  $E|X|^{\alpha}_p+E|Y|^{\alpha}_p<\infty$. Given $n$ samples $(X_1, Y_1),$ $\cdots, (X_n, Y_n)$ from $(X,Y)$, then,
$$ \widehat{siCov^{(\alpha)}}_n(X, Y) \stackrel{a.s.}{\longrightarrow} siCov^{(\alpha)}(X, Y), \, as\, n\to\infty,$$
and
$$ \widehat{siCor^{(\alpha)}}_n(X, Y) \stackrel{a.s.}{\longrightarrow} siCor^{(\alpha)}(X, Y), \, as\, n\to\infty.$$
\end{theorem}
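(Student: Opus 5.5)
The plan is to treat $\widehat{siCov^{(\alpha)}}_n(X,Y)$ as a $U$-statistic of degree $4$ with the symmetric kernel $k$ defined above, and then apply Hoeffding's strong law for $U$-statistics (Serfling (1980), Section 5.4, Theorem A). That theorem gives $U_n\to E\,k$ almost surely as soon as $E|k|<\infty$. Unbiasedness was already noted, and Theorem 2.3 (formula (2.4)) identifies $E\,k$ as $2{\cal J}_1-{\cal J}_2-{\cal J}_3=siCov^{(\alpha)}(X,Y)$. The first statement therefore reduces to checking integrability of the kernel.

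For integrability, every term of $k$ has the form $|X_i\pm Y_j\mp X_k\mp Y_l|^{\alpha}_p$ with at most four summands. By the elementary inequality $|a_1+\cdots+a_4|^{\alpha}\le c_\alpha(|a_1|^{\alpha}+\cdots+|a_4|^{\alpha})$, with $c_\alpha=\max(1,4^{\alpha-1})$, each term has expectation bounded by $4c_\alpha(E|X|^{\alpha}_p+E|Y|^{\alpha}_p)<\infty$. Hence $E|k|<\infty$ and the first almost sure convergence follows.

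For the correlation, I would first make $\widehat{siCor^{(\alpha)}}_n$ explicit, since the paper has not yet fixed it. Following Theorem 2.4, I take it to be the ratio of $\widehat{siCov^{(\alpha)}}_n$ to a $U$-statistic estimator $\widehat{\triangle}_n$ of the denominator $\triangle={\cal K}_1+{\cal K}_2-{\cal K}_3$. Its symmetrized kernel is built from $|X_i-X_j|^{\alpha}_p$, $|Y_k-Y_l|^{\alpha}_p$ and $|X_i-X_j+Y_k-Y_l|^{\alpha}_p$ over distinct indices. Here ${\cal K}_3$ should be read as $E|X_1-X_2+Y_3-Y_4|^{\alpha}_p$, as the proof of Theorem 2.4 shows. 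The same moment bound gives integrability, so $\widehat{\triangle}_n\to\triangle$ almost surely. Two cases then remain. If $\triangle>0$, the continuous mapping theorem applied to $(u,v)\mapsto u/v$ at $v=\triangle\neq0$ gives $\widehat{siCor^{(\alpha)}}_n\to siCor^{(\alpha)}$ almost surely. If $\triangle=0$, Remark 2.1 says $X$ or $Y$ is almost surely constant. Every sample difference involving that variable then vanishes identically, so $\widehat{siCov^{(\alpha)}}_n=0$ and, by the same convention as in Definition 2.2, the estimator equals $0=siCor^{(\alpha)}$.

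I expect the main obstacle to be the definitional and degenerate parts rather than the limit theorems themselves. One has to define the empirical correlation so that its denominator is a genuine $U$-statistic with the correct mean. This means checking the symmetrized kernel against the representation in Theorem 2.4 and correcting the apparent typo in ${\cal K}_3$. One also has to handle the convention when the denominator is $0$, or when it is $0$ for finite $n$ although $\triangle>0$; that happens only finitely often almost surely, so it does not affect the limit. After these points are settled, the rest is a direct application of Serfling's theorem.
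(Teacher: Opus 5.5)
Your route is the paper's route. The paper's whole argument is a citation of the strong law for $U$-statistics (Serfling 1980, Section 5.4, Theorem A). Your additions fill in what the paper leaves implicit: the integrability bound via $|a_1+\cdots+a_4|^{\alpha}\le c_\alpha\sum|a_i|^{\alpha}$, an explicit $U$-statistic $\widehat{\triangle}_n$ for the denominator (with ${\cal K}_3$ correctly read as $E|X_1-X_2+Y_3-Y_4|^{\alpha}_p$), and the ratio argument including the case $\triangle=0$.

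The one step that fails as written is ``$E\,k=2{\cal J}_1-{\cal J}_2-{\cal J}_3$''. You took this from the paper's unbiasedness remark instead of checking it, although you did check the denominator kernel. Count the terms of the printed kernel:
\begin{itemize}
\item The first sum has $24$ ordered triples with weight $1/12$, giving $2{\cal J}_1$.
\item The second sum has $24$ ordered quadruples with weight $1/24$, giving ${\cal J}_2$.
\item The third sum has only $12$ ordered pairs with weight $1/24$, giving ${\cal J}_3/2$.
\end{itemize}
So the printed $k$ has mean $2{\cal J}_1-{\cal J}_2-\tfrac12{\cal J}_3$. Hoeffding's theorem then gives almost sure convergence to this quantity, which differs from $siCov^{(\alpha)}(X,Y)$ whenever $X+Y$ is not almost surely constant. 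The weight on the last sum must be $1/12$ (equivalently $1/6$ over $i<j$). With that correction your argument is complete.

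The same correction is what makes your degenerate-case claim true. If $Y\equiv c$, the printed kernel reduces to $\frac{1}{24}\sum_{i\neq j}|X_i-X_j|^{\alpha}_p$, not $0$, whereas the corrected kernel vanishes identically. You do not actually need that claim, though. When $X$ or $Y$ is almost surely constant, the kernel of $\widehat{\triangle}_n$ is identically $0$. Hence $\widehat{\triangle}_n\equiv 0$, and the convention already gives $\widehat{siCor^{(\alpha)}}_n=0$ regardless of the numerator.
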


The following theorem  gives the asymptotic distribution of the test statistic
based on a known result on the convergence of $U$-statistics
(Serfling (1980, Chapter 5.5.1 Theorem A)). We state the result without a proof.

\begin{theorem}  Suppose that $0<Var(k_1(X,Y))<\infty$. Given $n$ samples $(X_1, Y_1),\cdots, (X_n, Y_n)$ from $(X,Y)$, then,
$${n}^{\frac 12}\left(\widehat{siCov^{(\alpha)}}_n(X, Y)-siCov^{(\alpha)}(X, Y)\right)\stackrel{\cal{D}}{\longrightarrow} N(0, 16Var((k_1(X,Y)))),$$
  as $ n\to\infty,$
where
$$k_1((x_1,y_1))=E_{2,3,4} k\left((x_1,y_1),(X_2,Y_2),(X_3,Y_3),(X_4,Y_4)\right).$$
Here, $E_{2,3,4}$ stands for taking expectation over $(X_2,Y_2),(X_3,Y_3),(X_4,Y_4)$.
\end{theorem}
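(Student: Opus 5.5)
The plan is to apply the classical central limit theorem for non-degenerate $U$-statistics (Hoeffding's theorem, as stated in Serfling (1980, Section 5.5.1, Theorem A)). The estimator $\widehat{siCov^{(\alpha)}}_n(X,Y)$ is a $U$-statistic of degree $m=4$ with kernel $k$. That theorem gives
\[
\sqrt{n}\,(U_n-\theta)\stackrel{\cal D}{\longrightarrow} N(0,m^2\zeta_1), \qquad \zeta_1={\rm Var}(k_1),
\]
provided $E k^2<\infty$ and $\zeta_1>0$. With $m=4$ this yields exactly the variance $16\,{\rm Var}(k_1(X,Y))$. So the work reduces to checking the hypotheses and identifying $\theta$.

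First, I will check that $k$ is a symmetric function of its four arguments. Each of the three sums runs over all ordered index tuples with distinct indices (reading $i\neq j\neq k$ as pairwise distinct). Permuting the four arguments therefore only permutes the summands, so $k$ is symmetric. Next, I will verify that $Ek=\theta=siCov^{(\alpha)}(X,Y)$. For distinct indices, each term in the first sum has expectation ${\cal J}_1$, each term in the second has expectation ${\cal J}_2$, and each term in the third has expectation ${\cal J}_3$. Counting terms ($24$, $24$ and $12$ respectively) gives $Ek=2{\cal J}_1-{\cal J}_2-{\cal J}_3/2$. This does not match Theorem 2.3, whose formula (2.4) has $-{\cal J}_3$. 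I therefore expect a normalization adjustment to be needed: for consistency with the claimed unbiasedness, the third coefficient should be $1/12$. The proof will take the kernel normalized so that $Ek=siCov^{(\alpha)}(X,Y)$ via Theorem 2.3, since the asymptotic statement only needs $\theta=Ek$.

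Second, I will check the moment condition $Ek^2<\infty$. This is the main obstacle, because the statement only assumes $0<{\rm Var}(k_1)<\infty$. Serfling's theorem requires $E k^2<\infty$, which follows from $E|X|_p^{2\alpha}+E|Y|_p^{2\alpha}<\infty$. To see this, apply the elementary bound $|a+b-c-d|^{\alpha}\le 4^{\alpha}\max(|a|^\alpha,|b|^\alpha,|c|^\alpha,|d|^\alpha)$ to each summand. I would add this moment assumption explicitly, or interpret the hypothesis as including it. Under that assumption, $k_1(x,y)=E_{2,3,4}k$ is well defined and has finite variance automatically. The condition ${\rm Var}(k_1)>0$ is exactly Hoeffding's non-degeneracy condition.

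Finally, I will apply Hoeffding's projection. Write $U_n-\theta=\frac{4}{n}\sum_{i=1}^n (k_1(X_i,Y_i)-\theta)+R_n$, where ${\rm Var}(R_n)=O(n^{-2})$. Then $\sqrt n R_n\to0$ in probability. The classical CLT applied to the i.i.d. sum, combined with Slutsky's lemma, then gives the stated limit $N(0,16\,{\rm Var}(k_1(X,Y)))$.
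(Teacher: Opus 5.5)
Your proposal is correct and takes the same route as the paper. The paper states the result without proof, as a direct consequence of Serfling's CLT for non-degenerate $U$-statistics of degree $m=4$, which gives the limiting variance $m^2\zeta_1=16\,{\rm Var}(k_1(X,Y))$. Your two extra checks are also sound and go beyond the paper: the third sum runs over $12$ ordered pairs at weight $1/24$, so $Ek=2{\cal J}_1-{\cal J}_2-\frac{1}{2}{\cal J}_3\neq siCov^{(\alpha)}(X,Y)$ and that weight must be $1/12$; and the hypothesis $0<{\rm Var}(k_1)<\infty$ does not by itself give the condition $Ek^2<\infty$ that Serfling's theorem requires, which your added assumption $E|X|_p^{2\alpha}+E|Y|_p^{2\alpha}<\infty$ supplies.
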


If $0<Var(k_1(X,Y))<\infty$, then $X$ and $Y$ cannot be  sub-independent, otherwise,    $k_1(X,Y)=0$ almost surely.  When $X$ and $Y$ are sub-independent, the asymptotic distribution of
$${n}^{\frac12}\left(\widehat{siCov^{(\alpha)}}_n(X, Y)-siCov^{(\alpha)}(X, Y)\right)$$  is no
longer normal. The asymptotic distribution can be
derived using results on first-order degenerate $U$-statistics.

\begin{theorem}  Suppose $X$ and $Y$ are sub-independent with $E|X|^{\alpha}_p+E|Y|^{\alpha}_p<\infty$. Given $n$ samples $(X_1, Y_1),\cdots, (X_n, Y_n)$ from $(X,Y)$, then,
$${n}^{\frac12} \widehat{siCov^{(\alpha)}}_n(X, Y)  \stackrel{\cal{D}}{\longrightarrow} 6\sum_{i=1}^{\infty} \lambda_i(\chi_i^2-1), \, as\, n\to\infty,$$
where $\chi_i^2$'s  are i.i.d. $\chi^2(1)$ variates and
$\{\lambda_i\}$  are the eigenvalues of  operator $G$ which is defined as
$Gf(x_1,y_1)=E\left(k_{12}(x_1,y_1), (X_2,Y_2)f(X_2,Y_2)\right),$
where
$$k_{12}((x_1,y_1), (x_2,y_2))=E_{3,4} k\left((x_1,y_1),(x_2,y_2),(X_3,Y_3),(X_4,Y_4)\right).$$
Here,   $E_{3,4}$ stands for taking expectation over $(X_3,Y_3),(X_4,Y_4)$.
\end{theorem}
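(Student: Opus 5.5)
The plan is to treat $\widehat{siCov^{(\alpha)}}_n(X,Y)$ as a $U$-statistic of degree $m=4$ with the symmetric kernel $k$ defined above, and to apply the limit theory for first-order degenerate $U$-statistics (Serfling (1980), Section 5.5.2, Theorem A). That theorem states the following. If $\theta=Ek$, the first projection $k_1-\theta$ vanishes almost surely, and the second projection $\tilde k_{12}=k_{12}-\theta$ satisfies $E\tilde k_{12}^2<\infty$ and is not identically zero, then $n(U_n-\theta)\stackrel{\cal D}{\longrightarrow}\binom{m}{2}\sum_i\lambda_i(\chi_i^2-1)$. Here the $\lambda_i$ are the eigenvalues of the integral operator $Gf(x_1,y_1)=E\bigl(\tilde k_{12}((x_1,y_1),(X_2,Y_2))f(X_2,Y_2)\bigr)$. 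Since $\binom{4}{2}=6$, this accounts for the constant $6$ in the statement.

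The first step is to verify degeneracy. Under sub-independence, Theorem 2.1(1) gives $\theta=siCov^{(\alpha)}(X,Y)=0$. To compute $k_1$ I would go back to the characteristic-function form used in the proof of Theorem 2.3. Each average in $k$ is an integral against $\rho_\alpha$ of $1-e^{i\langle t,\cdot\rangle}$, so after integrating out three coordinates I expect
$$k_1(x,y)=\int_{\Bbb{R}^p}{\rm Re}\Bigl[\bigl(e^{i\langle t,x+y\rangle}-\phi_X(t)e^{i\langle t,y\rangle}-\phi_Y(t)e^{i\langle t,x\rangle}+\phi_X(t)\phi_Y(t)\bigr)\overline{(\phi_{X,Y}(t,t)-\phi_X(t)\phi_Y(t))}\Bigr]\rho_\alpha(t)\,dt,$$
up to a positive constant. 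This vanishes identically when $\phi_{X,Y}(t,t)=\phi_X(t)\phi_Y(t)$ for all $t$. The second step is the analogous computation for $k_{12}$. Under sub-independence the only surviving term is $\int{\rm Re}[\xi_t(x_1,y_1)\overline{\xi_t(x_2,y_2)}]\rho_\alpha(t)\,dt$ (times a constant), where $\xi_t(x,y)$ denotes the bracket above. This exhibits $G$ as a positive, Hilbert--Schmidt-type operator. Square-integrability of $k_{12}$ follows from $|\xi_t|^2\lesssim\min(1,|t|^2|x+y|^2)$-type bounds together with the moment assumption $E|X|_p^\alpha+E|Y|_p^\alpha<\infty$ and Lemma 2.1, exactly as in the finiteness argument preceding Theorem 2.1.

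The main obstacle is the normalization. The degenerate theory delivers the weighted $\chi^2$ law for $n\,\widehat{siCov^{(\alpha)}}_n$. It also gives ${\rm Var}(\widehat{siCov^{(\alpha)}}_n)=O(n^{-2})$, so at the scale ${n}^{1/2}$ written in the statement the statistic tends to $0$ in probability. I therefore cannot prove the statement literally as worded. If one insists on matching the stated normalization, the only honest route is to establish the result in the form $n\,\widehat{siCov^{(\alpha)}}_n\stackrel{\cal D}{\longrightarrow}6\sum_i\lambda_i(\chi_i^2-1)$ and to read the ${n}^{1/2}$ in the statement as a misprint for $n$. I would also note that ${n}^{1/2}\widehat{siCov^{(\alpha)}}_n\to 0$ holds by Chebyshev's inequality.

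A secondary difficulty is the definition of $G$. It should use the centered kernel $\tilde k_{12}=k_{12}-\theta$; this coincides with $k_{12}$ under sub-independence, since then $\theta=0$. The displayed formula for $G$ in the statement has a misplaced parenthesis and should be read as $E\bigl(k_{12}((x_1,y_1),(X_2,Y_2))f(X_2,Y_2)\bigr)$.
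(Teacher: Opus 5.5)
Your route is the paper's own. The paper's proof is just an appeal to Serfling's theorem for first-order degenerate $U$-statistics (p.~194), and the constant $6=\binom{4}{2}$ comes from that theorem exactly as you say. Your objection to the normalization is correct, and the paper does not address it. Once $k_1\equiv 0$ and $Ek^2<\infty$, one has $\mathrm{Var}(\widehat{siCov^{(\alpha)}}_n)\sim 72\zeta_2/n^2$ with $\zeta_2=Ek_{12}^2$. Hence $n^{1/2}\widehat{siCov^{(\alpha)}}_n\to 0$ in probability. By contrast, $6\sum_i\lambda_i(\chi_i^2-1)$ has variance $72\sum_i\lambda_i^2=72\zeta_2$. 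So the statement is false as written whenever $\zeta_2>0$, and it holds with $n$ in place of $n^{1/2}$.

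Two steps of your argument are not yet justified.

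First, Serfling's Theorem A in Section 5.5.2 assumes $Ek^2<\infty$ for the full kernel, not merely $E\tilde k_{12}^2<\infty$. Your $O(n^{-2})$ variance claim needs it as well, and you only check the weaker condition. The full condition does hold under the stated hypothesis. Write $z_i=(x_i,y_i)$. By Lemma 2.1 applied to point masses, $k$ is the average over all $24$ permutations of its arguments of
\[
K(z_1,\dots,z_4)=\int\mathrm{Re}\bigl[e^{i\langle t,x_1\rangle}(e^{i\langle t,y_1\rangle}-e^{i\langle t,y_2\rangle})\,\overline{e^{i\langle t,x_3\rangle}(e^{i\langle t,y_3\rangle}-e^{i\langle t,y_4\rangle})}\bigr]\rho_\alpha(t)\,dt .
\]
Since $\int|e^{i\langle t,y_1\rangle}-e^{i\langle t,y_2\rangle}|^2\rho_\alpha(t)\,dt=2|y_1-y_2|_p^\alpha$, Cauchy--Schwarz gives $|K|\le 2|y_1-y_2|_p^{\alpha/2}|y_3-y_4|_p^{\alpha/2}$. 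Hence $\|k\|_2\le\|K\|_2\le 2E|Y_1-Y_2|_p^\alpha<\infty$. The same representation gives $k_1\equiv 0$ and $k_{12}(z_1,z_2)=\frac16\int\mathrm{Re}[\xi_t(z_1)\overline{\xi_t(z_2)}]\rho_\alpha(t)\,dt$ directly. The reason is that $E[e^{i\langle t,X\rangle}(e^{i\langle t,Y\rangle}-e^{i\langle t,Y'\rangle})]=\phi_{X,Y}(t,t)-\phi_X(t)\phi_Y(t)$, which vanishes under sub-independence.

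Second, your step $\theta=Ek=siCov^{(\alpha)}(X,Y)$ takes the paper's unbiasedness claim on trust, and that claim is false for the kernel as printed. The last sum runs over $12$ ordered pairs, so the coefficient $\frac1{24}$ gives $Ek=2\mathcal{J}_1-\mathcal{J}_2-\frac12\mathcal{J}_3$. Under sub-independence the printed statistic therefore converges a.s. to $\frac12\mathcal{J}_3$. This limit is positive unless $X+Y$ is degenerate, in which case $n\,\widehat{siCov^{(\alpha)}}_n\to\infty$. The coefficient must be $\frac1{12}$, which is exactly what symmetrizing $K$ produces, and you should state this correction alongside the $n^{1/2}\to n$ one.

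A harmless slip remains. Your $k_1$ is not a constant multiple of the displayed integral; exactly, $k_1=\frac12\int\mathrm{Re}[(\xi_t+\psi_t)\overline{\psi_t}]\rho_\alpha(t)\,dt$ with $\psi_t=\phi_{X,Y}(t,t)-\phi_X(t)\phi_Y(t)$. This still vanishes when $\psi\equiv 0$, so the degeneracy conclusion is unaffected.
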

{\bf Proof}. The asymptotic distribution of $\widehat{siCov^{(\alpha)}}_n(X, Y)$ can be proved by modifying the proof of Theorem  of Serfling
(1980, p. 194).
$\hfill\square$

\section{Applications }\label{intro}

Many well-known results in probability and statistics
are often based on the distribution of the sums of independent (and often
identically distributed) random variables rather than the joint distribution of the
summands. In other words, it is the convolution of the marginal distributions which is needed rather than the joint
distribution of the summands, which in the case of independence.
 Therefore, the full strength of independence of the summands will not
be required. The concept of
sub-independence can replace that of independence in most of the theorems   in
these situations which deal with the distribution of the sum of random
variables. For example,  in Khintchine's law of large numbers and Lindeberg-Levy's
central limit theorem as well as other important theorems in probability and
statistics (cf.  summability of random variables, compound of Poisson processes,  renewal theorems, and so on),  we have only used the  characteristic functions of the sum of random variables $X_i$'s. More precisely, the proof does not employ the full power of independence of  $X_i$'s, it only uses the
fact that $\phi_{X_{1}+\cdots+X_{n}}(t) =\prod_{i=1}^n\phi_{X_{i}}(t)$ for all $t\in \Bbb{R}$.
Therefore, the assumption
of  independent and identically distributed throughout this paper can be replaced with that of  sub-independent  and identically distributed whenever appropriate.
For more details, see Hamedani and Maadooliat (2015).  We mention below a few results in which the assumption of independence is replaced by
that of sub-independence.


\subsection{Concentration inequalities in machine learning}\label{intro}

Concentration inequalities give probability bounds for a random variable to be concentrated
around its mean, or for it to deviate from its mean or some other value. We present several concentration inequalities used in machine learning.

We start this subsection with the  sub-independent version of  Hoeffding's inequality (Hoeffding (1963, Theorem
2), see also  Theorem D.2 in  Mohri et al. (2018).

\begin{theorem} [Hoeffding's inequality] Let $X_1,\cdots, X_n$  be sub-independent random variables with
$X_i$ taking values in $[a_i, b_i]$ $(i=1,2,\cdots,n)$. Then, for any $\varepsilon>0$, the following inequalities hold for $S_n=\sum_{i=1}^n X_i$:
$$P(S_n-ES_n\ge \varepsilon)\le e^{-2\epsilon^2}/\sum_{i=1}^n(b_i-a_i)^2,$$
and
$$P(S_n-ES_n\le -\varepsilon)\le e^{-2\epsilon^2}/\sum_{i=1}^n(b_i-a_i)^2.$$
\end{theorem}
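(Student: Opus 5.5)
The plan is to follow the classical Chernoff--Hoeffding argument. The single place where independence enters that argument is the factorization of the moment generating function of $S_n$, and I will replace it by a factorization derived from sub-independence. (I read the intended bound as $\exp\bigl(-2\varepsilon^2/\sum_{i=1}^n(b_i-a_i)^2\bigr)$, which is the standard form.)

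\emph{Step 1: from characteristic functions to moment generating functions.} Sub-independence only gives
$$\phi_{S_n}(t)=\prod_{i=1}^n\phi_{X_i}(t)\quad\text{for real } t.$$
Since each $X_i$ takes values in $[a_i,b_i]$, the variable $S_n$ is bounded as well. Hence $z\mapsto E e^{izX_i}$ and $z\mapsto Ee^{izS_n}$ are entire functions of $z\in\Bbb{C}$: differentiation under the expectation is justified by boundedness.

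Both sides of the displayed identity are therefore entire and agree on $\Bbb{R}$. By the identity theorem they agree on all of $\Bbb{C}$. Setting $z=-i\lambda$ with $\lambda\in\Bbb{R}$ yields
$$Ee^{\lambda S_n}=\prod_{i=1}^n Ee^{\lambda X_i}\quad\text{for all real }\lambda.$$
Centering is harmless: $E e^{\lambda(S_n-ES_n)}=\prod_{i=1}^n Ee^{\lambda(X_i-EX_i)}$ follows by multiplying through by $e^{-\lambda ES_n}$. I expect this analytic-continuation step to be the only real obstacle, and boundedness is exactly what makes it work.

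\emph{Step 2: Hoeffding's lemma.} For each $i$ and every real $\lambda$, I will use
$$Ee^{\lambda(X_i-EX_i)}\le e^{\lambda^2(b_i-a_i)^2/8}.$$
This is the usual lemma, proved by convexity of $x\mapsto e^{\lambda x}$ on $[a_i,b_i]$ followed by a second-order Taylor bound on the log of the resulting two-point expression. It concerns a single variable, so no dependence assumption is needed.

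\emph{Step 3: Chernoff bound and optimization.} For $\lambda>0$, Markov's inequality together with Steps 1 and 2 gives
$$P(S_n-ES_n\ge\varepsilon)\le e^{-\lambda\varepsilon}\prod_{i=1}^n e^{\lambda^2(b_i-a_i)^2/8}.$$
Choosing $\lambda=4\varepsilon/\sum_{i=1}^n(b_i-a_i)^2$ produces the bound $\exp\bigl(-2\varepsilon^2/\sum_{i=1}^n(b_i-a_i)^2\bigr)$.

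For the lower tail, I apply the same argument to $-X_1,\dots,-X_n$. These are again sub-independent, since $\phi_{-S_n}(t)=\phi_{S_n}(-t)$, and they take values in the intervals $[-b_i,-a_i]$, which have the same lengths. This gives the second inequality.
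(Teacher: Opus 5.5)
Your proposal is correct and takes the same route as the paper. The paper's proof simply notes that $Ee^{tS_n}=\prod_{i=1}^n Ee^{tX_i}$ and defers the rest to the standard Chernoff--Hoeffding argument in Mohri et al.\ (2018), which is what your Steps 2--3 carry out. The paper asserts that moment generating function factorization without justification, whereas your analytic-continuation argument derives it from the characteristic-function definition of sub-independence, using boundedness of the $X_i$ to make all the transforms entire. Your reading of the bound as $\exp\bigl(-2\varepsilon^2/\sum_{i=1}^n(b_i-a_i)^2\bigr)$ is the intended one.
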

{\bf Proof}. We only note that, for $t>0$,  $Ee^{tS_n}=\prod_{i=1}^n Ee^{tX_i}$.  The rest follows from the proof given in Mohri et al. (2018).
$\hfill\square$

 The following theorem presents a finer upper bound than Hoeffding's inequality expressed in terms of the binary relative entropy.
\begin{theorem} [Sanov's theorem] Let $X_1,\cdots, X_n$  be sub-independent random variables drawn according to some distribution $F$ with mean $p$ and support included
in $[0, 1]$. Then, for any $q\in [0,1]$, the following inequality holds for $\hat{p}=1/n\sum_{i=1}^n X_i$:
$$P(\hat{p}\ge q)\le e^{-n D(q||p)},$$
where $D(q||p)=q\log\frac{q}{p}+(1-q)\log \frac{1-q}{1-p}$   is the binary relative entropy of $p$ and $q$.
\end{theorem}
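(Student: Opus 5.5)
The plan is the standard Chernoff route used for Hoeffding's inequality (Theorem 6.1). Sub-independence enters only through the moment generating function of the sum. First I would show that $Ee^{tS_n}=\prod_{i=1}^n Ee^{tX_i}$ for real $t\ge 0$, where $S_n=n\hat p$. Sub-independence gives $\phi_{S_n}(u)=\prod_{i=1}^n\phi_{X_i}(u)$ for all real $u$. Since each $X_i$ is supported in $[0,1]$, both sides extend to entire functions of $u\in\Bbb{C}$, namely $E e^{iuS_n}$ and $\prod_i Ee^{iuX_i}$, by bounded convergence. These entire functions agree on the real line, so they agree everywhere. Setting $u=-it$ gives the identity for the moment generating function. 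This is exactly the fact invoked in the proof of Theorem 6.1. I expect this analytic-continuation step to be the only point that needs care; boundedness of the support makes it routine.

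Next I would apply Markov's inequality. For $t>0$,
$$P(\hat p\ge q)=P(e^{tS_n}\ge e^{tnq})\le e^{-tnq}\prod_{i=1}^nEe^{tX_i}.$$
By convexity of $x\mapsto e^{tx}$ on $[0,1]$ we have $e^{tx}\le 1-x+xe^{t}$. Since every $X_i$ has mean $p$, this gives $Ee^{tX_i}\le 1-p+pe^t$. Hence
$$P(\hat p\ge q)\le \exp\Big(-n\big[tq-\log(1-p+pe^t)\big]\Big).$$

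Finally I would optimize over $t>0$. For $p<q<1$ the maximizer is $e^{t}=\frac{q(1-p)}{p(1-q)}>1$. Substituting it gives $tq-\log(1-p+pe^t)=q\log\frac qp+(1-q)\log\frac{1-q}{1-p}=D(q\|p)$, which is the claimed bound.

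The boundary and trivial cases are handled separately. If $q\le p$, then $D(q\|p)\ge 0$ and the bound $e^{-nD(q\|p)}\le 1$ either holds trivially or is the intended reading. The statement really concerns $q\ge p$, and I would note this restriction. If $q=1$, the bound is recovered by letting $t\to\infty$, with $D(1\|p)=\log(1/p)$. If $p\in\{0,1\}$, the claim is immediate.
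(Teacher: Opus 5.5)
Your proposal is correct and is essentially the argument the paper intends. The paper gives no separate proof; as in its proof of Hoeffding's inequality, it only asserts $Ee^{tS_n}=\prod_{i=1}^n Ee^{tX_i}$ for $t>0$ and defers to the Chernoff argument of Mohri et al.\ (2018), which is exactly your Markov--convexity--optimization chain, and your analytic-continuation step (or, more simply, noting that by uniqueness of characteristic functions $S_n$ has the law of a sum of independent copies) supplies the justification of that factorization that the paper omits.

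One correction to your side remark: for $q<p$ the inequality does not ``hold trivially'' but is false. Take $X_i\equiv p$, so $P(\hat p\ge q)=1>e^{-nD(q\|p)}$. Your restriction to $q\ge p$ is therefore a necessary amendment of the statement, not merely its intended reading.
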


Sanov's theorem,  is based on the concept of sub-independence, can be used to prove the following multiplicative Chernoff bounds.
which strengthen the characterizations given in  Mohri et al. (2018, Theorem D.4)  under the assumption of independence.

\begin{theorem} [Multiplicative Chernoff bounds] Let $X_1,\cdots, X_n$  be sub-independent random variables drawn according to some distribution $F$ with mean $p$ and support included
in $[0, 1]$. Then, for any $\gamma\in [0,1/p-1]$, the following inequalities hold for $\hat{p}=1/n\sum_{i=1}^n X_i$:
$$P(\hat{p}\ge (1+\gamma)p)\le e^{-\frac{np\gamma^2}{3}},$$
and
$$P(\hat{p}\le (1-\gamma)p)\le e^{-\frac{np\gamma^2}{2}}.$$
\end{theorem}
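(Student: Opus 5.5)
The plan is to derive both bounds from Sanov's theorem (Theorem 6.2), which already holds under sub-independence. Once $P(\hat p\ge q)\le e^{-nD(q\|p)}$ is available, the sub-independence assumption plays no further role and the remaining work is purely analytic: lower-bounding the binary relative entropy. For the upper tail I set $q=(1+\gamma)p$. The restriction $\gamma\in[0,1/p-1]$ is exactly what ensures $q\in[p,1]$, so Theorem 6.2 applies and gives $P(\hat p\ge (1+\gamma)p)\le e^{-nD((1+\gamma)p\|p)}$. It then suffices to show $D((1+\gamma)p\|p)\ge p\gamma^2/3$.

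For the lower tail I apply Theorem 6.2 to the variables $1-X_i$. These are also sub-independent, because $\phi_{\sum(1-X_i)}(t)=e^{int}\phi_{\sum X_i}(-t)=\prod_i e^{it}\phi_{X_i}(-t)=\prod_i\phi_{1-X_i}(t)$. They are supported in $[0,1]$ and have mean $1-p$. Using the symmetry $D(1-q\|1-p)=D(q\|p)$, this yields $P(\hat p\le (1-\gamma)p)\le e^{-nD((1-\gamma)p\|p)}$, and it remains to show $D((1-\gamma)p\|p)\ge p\gamma^2/2$. For $\gamma>1$ the event $\hat p<0$ is empty, so only $\gamma\le 1$ needs attention.

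The main step, and the only real obstacle, is the entropy inequality. I would first use $(1-q)\log\frac{1-q}{1-p}\ge p-q$, which follows from $\log x\ge 1-1/x$ with $x=\frac{1-q}{1-p}$. This gives $D(q\|p)\ge q\log\frac qp+p-q$. With $q=(1+u)p$ the bound becomes $p\,h(u)$, where $h(u)=(1+u)\log(1+u)-u$. It then suffices to prove two scalar inequalities:
\[
h(\gamma)\ge\frac{\gamma^2}{3}\quad(0\le\gamma\le 1),\qquad h(-\gamma)\ge \frac{\gamma^2}{2}\quad(0\le \gamma\le 1).
\]
For the first I would use the standard bound $h(u)\ge \frac{u^2}{2+2u/3}$, obtained by checking that $g(u)=h(u)-\frac{u^2}{2+2u/3}$ has $g(0)=g'(0)=0$ and $g''\ge0$. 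On $[0,1]$ this gives $h(\gamma)\ge \frac{\gamma^2}{2+2/3}\ge \gamma^2/3$. For the second, set $k(\gamma)=h(-\gamma)-\gamma^2/2$. Then $k(0)=0$, $k'(\gamma)=-\log(1-\gamma)-\gamma\ge0$, and so $k\ge0$.

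One caveat concerns the range of the upper-tail bound. The theorem allows $\gamma$ up to $1/p-1$, which can exceed $1$, whereas the bound $h(\gamma)\ge\gamma^2/3$ only holds up to roughly $\gamma\approx 1.8$. I would therefore either restrict to $\gamma\le 1$, which is the standard statement in Mohri et al. (2018, Theorem D.4), or note that for larger $\gamma$ one only has $h(\gamma)\ge \gamma^2/(2+2\gamma/3)$. The displayed upper bound would need to be reconciled with this gap, and that reconciliation is the point I expect to require the most care.
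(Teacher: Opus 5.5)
Your argument follows the only route the paper indicates. The paper gives no proof, just the remark that the bounds follow from Sanov's theorem (Theorem 6.2) as in Mohri et al. Every step you actually carry out is sound:
\begin{itemize}
\item you invoke Theorem 6.2 only with $q\ge p$, which matters because, as printed for all $q\in[0,1]$, it is false when $q<p$;
\item the reflection $X_i\mapsto 1-X_i$ preserves sub-independence by the computation you give;
\item the estimates $D(q\|p)\ge p\,h(u)$, $h(u)\ge u^2/(2+2u/3)$ and $h(-\gamma)\ge\gamma^2/2$ are correct.
\end{itemize}
This proves the lower-tail bound on the whole range. It also proves the upper-tail bound for $\gamma\le 3/2$ (where $2+2\gamma/3\le 3$), in particular for $\gamma\in[0,1]$.

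The part you flag as ``requiring the most care'' cannot be repaired, because the upper-tail inequality is false on the stated range $\gamma\in[0,1/p-1]$. Take $X_1,\dots,X_n$ i.i.d.\ Bernoulli$(1/10)$, which are independent and hence sub-independent, and take $\gamma=1/p-1=9$. Then $(1+\gamma)p=1$ and $P(\hat p\ge 1)=10^{-n}\approx e^{-2.30\,n}$. The claimed bound is $e^{-np\gamma^2/3}=e^{-2.7\,n}$, which is smaller for every $n$.

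Nor is your threshold near $1.8$ an artifact of discarding $(1-q)\log\frac{1-q}{1-p}-(p-q)$. That term is $O(p^2)$ for fixed $\gamma$, so $D((1+\gamma)p\|p)/p\to h(\gamma)$ as $p\to 0$. Since $\frac{1}{n}\log P(\hat p\ge q)\to -D(q\|p)$ for Bernoulli sums, the bound fails for every $\gamma\ge 1.82$ (where $h(\gamma)<\gamma^2/3$) once $p$ is small and $n$ is large.

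So no reconciliation is possible. Your write-up should say plainly that the statement must be amended, in one of two ways:
\begin{itemize}
\item restrict the upper tail to $\gamma\in[0,1]$ (or $\gamma\le 3/2$); or
\item keep the full range $[0,1/p-1]$ with the weaker exponent $np\gamma^2/(2+\gamma)$, which your inequality $h(u)\ge u^2/(2+2u/3)\ge u^2/(2+u)$ already delivers.
\end{itemize}
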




\subsection{The  sub-independent version of actuarial models}

In this section, we present some important models and settings in actuarial science based on the concept
of sub-independence,  where it has traditionally been assumed that independence holds between certain risks and assume that  the
number of terms is independent of the summands  (see, e.g. Kaas et al. (2008)).

\subsubsection{The individual risk model under sub-independence  }

We consider a portfolio of $n$ sub-independent insurance policies (or risks) $X_i$'s. The aggregate claim amount for the portfolio is
$S=X_1+\cdots+X_n.$
When the number  $n$ of policies  is large, according to the sub-independence version of the central limit theorem, $S$ can approximately follow a normal distribution
$$S\sim N\left(\sum_{i=1}^n E(X_i), \sum_{i=1}^n Var(X_i)\right).$$
This approximation greatly improves computational efficiency, especially for risk assessment of large-scale policy portfolios.
For small-scale combinations, the distribution of $S$ can be accurately solved through convolution operations:
$$F_S(s)=F_{X_1}*F_{X_1}*\cdots * F_{X_n}(s).$$
Or use the moment generating function (MGF):
$M_S(t)=\prod_{i=1}^n M_{X_i}(t);$
Or use the characteristic function (CF)
$\psi_S(t)=\prod_{i=1}^n \psi_{X_i}(t).$


We now present several examples of distributions possessing the reproductive property (i.e., closure under convolution) under sub-independence.

{\bf Example} (i) If $X_1, \cdots, X_n$ are sub-independent and $X_i\sim Bin(n_i,p), i=1,2,\cdots,n$. Then $\sum_{i=1}^n X_i\sim Bin(\sum_{i=1}^n n_i, p).$ \\
(ii) If $X_1, \cdots, X_n$ are sub-independent and $X_i\sim Poi(\lambda_i), i=1,2,\cdots,n$. Then $\sum_{i=1}^n X_i\sim Poi(\sum_{i=1}^n \lambda_i).$\\
(iii) If $X_1, \cdots, X_n$ are sub-independent and $X_i\sim NB(r_i,p), i=1,2,\cdots,n$. Then $\sum_{i=1}^n X_i\sim NB(\sum_{i=1}^n r_i, p).$ \\
(iv) If $X_1, \cdots, X_n$ are sub-independent and $X_i\sim N(\mu_i, \sigma_i^2), i=1,2,\cdots,n$. Then $\sum_{i=1}^n X_i\sim N(\sum_{i=1}^n \mu_i,  \sum_{i=1}^n \sigma_i^2,).$ \\
There are similar conclusions about Gamma, Chi-square and Cauchy  distribution as well.

\subsubsection{The collective risk model under sub-independence  }

The total amount paid (or aggregate losses) incurred by an insurer is expressed as $S_N=X_1+\cdots+X_N,$  where $X_1, X_2,\cdots$ are sub-independent  identically distributed  random variables representing the claim payment amounts, $N$ is the (random) number of claims incurred with the probability mass function $P(N=n)=p_n, n=0,1,\cdots$.  Throughout, we setting $S_0 = 0$, $M_x(t)=E(e^{tX}), P_X(z)=E(z^X)$.
We assume that   $E(e^{itS_n}|N) = E(e^{itS_n})$ for all real $t$. Then,
the distribution of $S$  is given as
$F_S(s)=\sum_{n=0}^{\infty}F_{X}^{*n}(s)p_n,$
the  probability generating function  of $S$  is given as
$P_S(z)=P_N(P_X(z))^n),$
the moment generating function  of $S$  is given as
$M_S(t)=P_N(M_X(t)),$
and the characteristic function  of $S$  is given as
$\psi_S(t)=P_N(\psi_X(t)).$
In particular,
$$ES_N=E(N)E(X),\, Var(S_N)=E(N) Var(X)+(EX)^2Var(N).$$

\begin{remark} In particular, if $N\sim Poission (\lambda)$, i.e. $P(N=n)=\frac{e^{-\lambda}\lambda^n}{n!}, \, n=0,1,2,\cdots$.
The aggregate claim amount $S_N$   follows a compound Poisson distribution with rate $\lambda$ and severity distribution $F_X$:
$S_N\sim$ Compound Poisson $(\lambda, F_X)$. The characteristic function of $S_N$ is given as
$$\psi_{S_N}(t)=\exp(\lambda(\psi_X(t)-1))).$$
From which, we can prove that the sum of $n$ sub-independent compound Poisson distributions is still a compound Poisson distribution.
\end{remark}

\subsection{The sub-independent version of renewal theorems  }\label{intro}

Let $X, X_1,X_2,\cdots$ be a   sequence of nonnegative, independent identically distributed (i.i.d.) random variables (r.v.'s)
with distribution function (d.f.) $F(x) = P(X \le x)$ and   expectation $\mu$.
The sequence $\{S_n, n = 0, 1, 2, . . .\}$ defined by $S_0=0$ and
$S_n = X_1 +\cdots +X_n, n = 1, 2, . . .$,
is called a renewal sequence or a renewal process. The renewal counting process $\{N(t), t \ge 0\}$ associated with the
renewal sequence $\{S_n, n \ge 0\}$ is defined by
$N(t)=\sup\{n\ge 0: S_n\le t \}$.
The most of the important classical
results in renewal theory based on the concept of  independence can be stated in terms of  sub-independent rv's. We only list some of them.

The following result for i.i.d. random variables is well known; see, e.g. Mitov and Omey (2014).

\begin{theorem}   Let $X_1,X_2,\cdots$ be a   sequence of nonnegative, sub-independent identically distributed  random variables with a proper d.f. $F$,  then,

(i) with probability 1,
\begin{eqnarray*}
	\lim_{t\to\infty}\frac{N(t)}{t}=\left\{\begin{array}{ll}
		 \frac{1}{\mu}, \ &{\rm if}\, \, \mu<\infty,\\
		 0,  \ &{\rm if}\,\,  \mu=\infty;
	\end{array}
	\right.
\end{eqnarray*}
(ii) $$P(N(t)=n)=F^{*n}(t)-F^{*n+1}(t), n=1,2,\cdots,$$
where $F^{*n}$ is the $n$-fold convolution of $F$ with itself.
\end{theorem}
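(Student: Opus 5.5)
The plan is to treat (ii) as an exact distributional identity and (i) as a consequence of a strong law for $S_n/n$. For (ii), the key point is that mutual sub-independence (Definition 2.1(ii)) applied to $\{X_1,\dots,X_n\}$ gives $\phi_{S_n}(t)=\prod_{i=1}^n\phi_{X_i}(t)=\phi_X(t)^n$ for every $n$. By uniqueness of characteristic functions, $S_n$ has d.f. $F^{*n}$, exactly as under independence.

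Since the $X_i$ are nonnegative, $S_n\le S_{n+1}$, and $\{N(t)\ge n\}=\{S_n\le t\}$. Hence
$$\{N(t)=n\}=\{S_n\le t\}\setminus\{S_{n+1}\le t\},\qquad P(N(t)=n)=F^{*n}(t)-F^{*(n+1)}(t).$$
This uses only the marginal laws of $S_n$ and $S_{n+1}$, so sub-independence suffices with no further work.

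For (i) I would use the standard sandwich argument. By definition of $N(t)$ we have $S_{N(t)}\le t<S_{N(t)+1}$. On $\{N(t)\ge1\}$ this gives
$$\frac{S_{N(t)}}{N(t)}\le \frac{t}{N(t)}<\frac{S_{N(t)+1}}{N(t)+1}\cdot\frac{N(t)+1}{N(t)}.$$
Since $F$ is proper, each $S_n<\infty$ a.s., and therefore $N(t)\to\infty$ a.s. as $t\to\infty$. If $S_n/n\to\mu$ a.s. (including $\mu=\infty$), both bounds converge to $\mu$ along $N(t)$, so $N(t)/t\to1/\mu$, with the convention $1/\infty=0$.

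The only nontrivial input is therefore the strong law $S_n/n\to\mu$ a.s. for a sub-independent identically distributed sequence. I would invoke it in the form the paper adopts at the start of Section 6: classical limit theorems for sums remain valid when independence is replaced by sub-independence (Hamedani and Maadooliat (2015)). This step is the main obstacle. Sub-independence fixes the law of each individual $S_n$ but not the joint law of $(S_n)_n$, whereas almost-sure statements depend on that joint law. The characteristic-function argument alone therefore gives only $S_n/n\to\mu$ in probability (Khintchine).

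To turn this into an almost-sure statement, I would exploit monotonicity of $S_n$. First, along the subsequence $n_k=\lfloor \theta^k\rfloor$ I would try to get a Borel--Cantelli bound for $P(|S_{n_k}/n_k-\mu|>\varepsilon)$, using only the law $F^{*n_k}$ of $S_{n_k}$. One option is a truncated Chebyshev bound of Etemadi type, in which the second moment of the truncated sum is computed from $\phi_X^{n_k}$. Second, I would fill the gaps between the $n_k$ using $S_{n_k}\le S_n\le S_{n_{k+1}}$ for $n_k\le n\le n_{k+1}$.

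The caveat is that truncation does not obviously preserve sub-independence, so this is exactly where the argument may need the stronger structure that the cited reference supplies. For $\mu=\infty$ I would use the same monotone comparison, showing $\liminf S_n/n\ge c$ for every $c$ from the laws of the $S_{n_k}$.
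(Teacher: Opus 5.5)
Your proof of (ii), the identity $\{N(t)\ge n\}=\{S_n\le t\}$, and the sandwich that reduces (i) to $S_n/n\to\mu$ a.s.\ are all fine. The gap is the strong law itself. You do not prove it; you fall back on the blanket remark that opens Section 6. That is in fact all the paper does: it states the theorem with a pointer to the i.i.d.\ case and gives no argument. But, as you correctly note, that remark cannot by itself justify an almost-sure statement, so as written (i) is unproved. The sandwich also needs $N(t)<\infty$ a.s., i.e.\ $S_n\to\infty$, which again follows from the strong law once $F(0)<1$.

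The missing idea is that your subsequence plan never requires truncating the sub-independent sequence. The Borel--Cantelli sum $\sum_k P(|S_{n_k}-n_k\mu|>\varepsilon n_k)$ involves only the laws $F^{*n_k}$. It therefore equals the same sum for an honest i.i.d.\ walk $S_n'=X_1'+\cdots+X_n'$ with $X_i'\sim F$, and you may truncate there at level $n_k=\lfloor\theta^k\rfloor$. Take $k$ large enough that $E[X;X>n_k]\le\varepsilon/2$. Bounding $P(\max_{i\le n_k}X_i'>n_k)\le n_kP(X>n_k)$ and applying Chebyshev to the i.i.d.\ truncated sum gives
\[
P\bigl(|S_{n_k}-n_k\mu|>\varepsilon n_k\bigr)=P\bigl(|S'_{n_k}-n_k\mu|>\varepsilon n_k\bigr)\le n_kP(X>n_k)+\frac{4E[X^2;X\le n_k]}{\varepsilon^2 n_k}.
\]
Both terms are summable in $k$, because $\sum_{k:\,n_k<x}n_k\le C_\theta(1+x)$ and $\sum_{k:\,n_k\ge x}n_k^{-1}\le C_\theta/x$ for $x>0$. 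The totals are at most $C_\theta(1+EX)$ and $4C_\theta EX/\varepsilon^2$. Borel--Cantelli needs no independence, so $S_{n_k}/n_k\to\mu$ a.s. Your monotone comparison $S_{n_k}/n_{k+1}\le S_n/n\le S_{n_{k+1}}/n_k$ for $n_k\le n\le n_{k+1}$ then yields $\mu/\theta\le\liminf S_n/n\le\limsup S_n/n\le\theta\mu$. Now let $\theta\downarrow1$ along a countable set.

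For $\mu=\infty$, choose $M$ with $E(X\wedge M)>c$. Hoeffding on the i.i.d.\ copy gives
\[
P(S'_{n_k}<cn_k)\le P\Bigl(\sum_{i\le n_k}(X_i'\wedge M)<cn_k\Bigr)\le \exp\bigl(-2n_k(E(X\wedge M)-c)^2/M^2\bigr),
\]
which is summable, and the same sandwich gives $\liminf S_n/n\ge c/\theta$. Nonnegativity is essential throughout: it is the only thing that controls $S_n$ between the $n_k$ without any information about the joint law of the sequence.
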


The following theorem gives the  results of the process $N(t)$ under the assumption of sub-independence.

\begin{theorem}   Let $X_1,X_2,\cdots$ be a   sequence of nonnegative, sub-independent identically distributed  random variables with a proper d.f. $F$ satisfying $F(0)<1$ and $F(\infty)=1$,  then,
$$U(t):=E(N(t))=\sum_{k=1}^{\infty}F^{*n}(t),$$
 $$U(t)=F(t)+(U*F)(t),$$
 and
 \begin{eqnarray*}
	\lim_{t\to\infty}\frac{U(t)}{t}=\left\{\begin{array}{ll}
		 \frac{1}{\mu}, \ &{\rm if}\, \, \mu<\infty,\\
		 0,  \ &{\rm if}\,\,  \mu=\infty,
	\end{array}
	\right.
\end{eqnarray*}
where $F^{*n}$ is the $n$-fold convolution of $F$ with itself.
If, in addition, $0<\sigma^2=Var(X)<\infty$, then
$$\frac{N(t)-t/\mu}{\sigma\sqrt{\mu^3 t}} \stackrel{\cal{D}}{\longrightarrow} N(0,1), \,as\, t\to\infty.$$
\end{theorem}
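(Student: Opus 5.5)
We have to prove the renewal theorem when the $X_i$ are only sub-independent. The plan rests on one fact. The statement we can use is that for each $n$ the distribution of $S_n=X_1+\cdots+X_n$ is $F^{*n}$. We read ``sub-independent identically distributed'' as mutual sub-independence (Definition 2.1(ii)), so that $\phi_{S_n}(t)=\phi_X(t)^n$ for every $n$. Uniqueness of characteristic functions then gives $P(S_n\le t)=F^{*n}(t)$. Each claim is then reduced to a statement about the marginal laws of the $S_n$, or to one that can be argued along the lines of the classical proofs.

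\textbf{The renewal function and its equation.} Since $N(t)=\sum_{n\ge1}\mathbf 1\{S_n\le t\}$ and the $X_i\ge0$, monotone convergence gives
\[
U(t)=\sum_{n\ge1}P(S_n\le t)=\sum_{n\ge1}F^{*n}(t).
\]
This uses only the one-dimensional laws of the $S_n$. For finiteness we use $F(0)<1$. Pick $\delta>0$ with $F(\delta)<1$ and set $\beta=E e^{-X/\delta}<1$. Then Chernoff gives $F^{*n}(t)=P(S_n\le t)\le e^{t/\delta}\beta^n$. Here $Ee^{-S_n/\delta}=\beta^n$, because the Laplace transform of $S_n$ is that of $F^{*n}$. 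So the series converges. The renewal equation $U=F+U*F$ follows by writing $F^{*(n+1)}=F^{*n}*F$ and summing, which is legitimate because all terms are nonnegative.

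\textbf{The elementary renewal theorem $U(t)/t\to1/\mu$.} Since $U$ is determined by $F$ alone, $U$ coincides with the renewal function of an i.i.d. sequence with d.f. $F$. The classical elementary renewal theorem, via Wald/Tauberian arguments, therefore applies verbatim. This includes the case $\mu=\infty$, handled by truncation in the i.i.d. model. Concretely, we can use the Laplace transform $\hat U(s)=\hat F(s)/(1-\hat F(s))\sim 1/(\mu s)$ as $s\downarrow0$, together with Karamata's Tauberian theorem for the monotone function $U$. This avoids any stopping-time argument on the dependent sequence.

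\textbf{The central limit theorem for $N(t)$.} The natural plan is the classical duality $\{N(t)<n\}=\{S_n>t\}$. Take $n=n(t)=\lfloor t/\mu+x\sigma\sqrt{t/\mu^3}\rfloor$. Then
\[
P\Big(\tfrac{N(t)-t/\mu}{\sigma\sqrt{t/\mu^3}}<x\Big)\approx P(S_{n}>t)=1-F^{*n}(t).
\]
This again involves only the law of a single $S_n$, namely $F^{*n}$. By the Lindeberg--L\'evy CLT (which, as noted in the paper, needs only $\phi_{S_n}=\phi_X^n$), $(S_n-n\mu)/(\sigma\sqrt n)\Rightarrow N(0,1)$. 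The standard computation $(t-n\mu)/(\sigma\sqrt n)\to -x$ then gives the limit $\Phi(x)$.

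The main obstacle is the identification $P(S_n\le t)=F^{*n}(t)$ for all $n$ simultaneously. Pairwise sub-independence of the $X_i$ would not suffice; it is mutual sub-independence of the whole family that gives it. Once this is established, every claim depends only on the marginal laws of the $S_n$. In particular, we will avoid any almost-sure or Wald-identity argument that would need the joint law of the sequence.
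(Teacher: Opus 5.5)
Your proof is correct, and it is more rigorous than what the paper provides. The paper gives no proof of this theorem, only the blanket remark at the start of Section 6 that the classical arguments use nothing beyond $\phi_{X_1+\cdots+X_n}=\prod_{i=1}^n\phi_{X_i}$. Your reduction turns that remark into an actual proof. Every claim depends only on the one-dimensional laws of the $S_n$, and through $\{N(t)\ge n\}=\{S_n\le t\}$ the law of $N(t)$ for each fixed $t$ is exactly the i.i.d.\ one.

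This matters for the elementary renewal theorem. The textbook proof uses Wald's identity for $S_{N(t)+1}$, i.e.\ $E[X_n\mathbf 1\{S_{n-1}\le t\}]=\mu P(S_{n-1}\le t)$. Sub-independence constrains only the law of $S_{n-1}+X_n$ and does not give this identity. So ``the same proof works'' is not literally true at that step. Identifying $U$ with the i.i.d.\ renewal function, or your Tauberian argument, is fine.

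Two remarks. First, you need only Definition 2.1(i) for each initial block $X_1,\dots,X_n$, not sub-independence over all subsets.

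Second, your CLT uses the normalization $\sigma\sqrt{t/\mu^3}$, which is the correct one. The statement as printed has $\sigma\sqrt{\mu^3 t}$, under which the limit would be $N(0,\mu^{-6})$, so it is false unless $\mu=1$. Say explicitly that you are proving the corrected statement rather than changing it silently.

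The ``$\approx$'' in that step can be made rigorous in two moves. Sandwich $n(t)$ between the floor and ceiling of $t/\mu+x\sigma\sqrt{t/\mu^3}$. Then use P\'olya's uniform convergence for the laws of $(S_n-n\mu)/(\sigma\sqrt n)$. This again uses only the marginal laws of the $S_n$.
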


\section{Conclusions and prospects }\label{intro}

This paper introduced a new  index  of dependence for testing   sub-independence. The concept of sub-independence,  lies in between independence and uncorrelatedness, is much
weaker than that of independence, is shown to be sufficient to yield the conclusions of  theorems and
results  in these situations which deal with the distribution of the sum of random
variables. Hence, it  providing a more flexible theoretical basis for handling nonlinear and non-normally distributed data.
Sub-independence  can be applied in modern statistical models for complex scenarios where traditional independence assumptions are too strong and only uncorrelated is insufficient to characterize relationships.  It can be expected to demonstrate unique value in the following areas:

{\bf Financial time series modeling}:
In asset return modeling, the independence of sequences often does not hold (such as volatility clustering effects), but assuming complete correlation directly would make the model too complex. Sub-independence can be used to describe weak dependencies between high-order moments (such as volatility), capturing nonlinear dynamic features while maintaining model simplicity. It is commonly used in residual diagnosis and extension of GARCH family models.

{\bf Survival analysis and reliability engineering}:
In multivariate survival data, failure events between individuals may not be completely independent (such as familial genetic influences), but there may not necessarily be strong associations. Sub-independence is used to construct a semi-parametric copula model, allowing for flexible setting of marginal distributions while connecting variables with weaker dependency structures to enhance model robustness.

{\bf Dimensionality reduction and variable selection: 
}
In gene expression data analysis, there are complex regulatory networks among thousands of genes. Sub-independence can be used as a screening criterion to identify gene modules that are not completely independent but have potential functional associations, assisting in the construction of more reasonable biological network models.

{\bf Bayesian networks and causal inference}:
When the data is insufficient to support strong independence testing, secondary independence can be used as an intermediate hypothesis to simplify the learning process of graph model structure. It helps avoid structural misjudgment caused by excessive reliance on chi-square tests or information criteria, especially suitable for small sample or sparse data scenarios.

{\bf Feature engineering in machine learning}:
In ensemble learning and random forests, the selection of feature subsets often relies on the assumption of independence. Introducing sub-independence measures  can more finely control the redundancy between features and improve the model's generalization ability while ensuring diversity.


\noindent{\bf Acknowledgements.}
This research   was supported by the National Natural Science Foundation of China (No. 12071251).\\

\bibliographystyle{model1-num-names}

\end{document}